\newtheorem{teo}{Theorem}[section]
\newenvironment{proof}[1][Proof]{\medskip\noindent\textit{#1. }\upshape}{\medskip}
\newtheorem{proposition}{Proposition}[section]
\numberwithin{equation}{section}
\begin{document}

\date{}
\title{Strong solutions in $L^2$ framework for fluid-rigid body interaction
problem - mixed case}
\author{{\large Hind Al Baba$^{2,4}$, Nikolai V. Chemetov$^1$, \v S\'arka
Ne\v casov\'a$^{2,3}$, Boris Muha$^5$} \\
\\
{\small $^1$ CMAF / University of Lisbon, Portugal }\\
{\small chemetov@ptmat.fc.ul.pt}\\
{\small $^2$ Institute of Mathematics, }\\
{\small \v Zitn\'a 25, 115 67 Praha 1, Czech Republic }\\
{\small $^3$ matus@math.cas.cz}\\
{\small $^4$ albaba@math.cas.cz}\\
{\small $^5$ Department of Mathematics}\\
{\small Faculty of Science}\\
{\small University of Zagreb,Croatia}\\
{\small borism@math.hr}}
\maketitle
\date{}

\section{Introduction}

In this paper we investigate the motion of a rigid body inside a viscous
incompressible fluid when mixed boundary conditions are considered. The
fluid and the body occupy a bounded domain $\mathcal{O}\subset\mathbb{R}^{d}$
($d=2$ or $3$).

In order to describe our approach, let us denote by $\mathcal{B}(t)\subset
\mathcal{O}$ a bounded domain occupied by the rigid body and a domain filled
by the fluid by $\mathcal{F}(t)=\mathcal{O}\setminus \overline{\mathcal{B}(t)%
}$ at a time moment $t\in \mathbb{R}^{+}.$\ \ Assuming that the initial
position $\mathcal{B}(0)$ of the rigid body is prescribed, for simplicity of
notation we denote $\mathcal{B}_{0}=\mathcal{B}(0)$ and $\mathcal{F}_{0}=%
\mathcal{F}(0).$\ \ The interface between the body and the fluid is denoted
by $\partial \mathcal{B}(t)$, the normal vector to the boundary is denoted
by $\boldsymbol{n}(t)$ and it is pointing outside $\mathcal{O}$ and inside $%
\mathcal{B}(t)$. We write
\begin{equation*}
\mathcal{Q}_{\mathcal{F}(t)}=\{(t,\boldsymbol{x})\in \mathbb{R}%
^{1+d}:\,\,t\in \mathbb{R}^{+},\quad \boldsymbol{x}\in \mathcal{F}%
(t)\},\qquad \mathcal{Q}_{\partial \mathcal{B}(t)}=\{(t,\boldsymbol{x})\in
\mathbb{R}^{1+d}:\,\,t\in \mathbb{R}^{+},\quad \boldsymbol{x}\in \partial
\mathcal{B}(t)\}.
\end{equation*}%
The fluid motion is governed by the equations
\begin{equation}
\left\{
\begin{array}{ccc}
\partial _{t}\boldsymbol{u}_{\mathcal{F}}\,+\,\mathrm{div}\,\mathbb{T}(%
\boldsymbol{u}_{\mathcal{F}},p_{\mathcal{F}})\,+\,(\boldsymbol{u}_{\mathcal{F%
}}\cdot \nabla )\boldsymbol{u}_{\mathcal{F}}=\boldsymbol{f}_{0},\qquad
\mathrm{div}\,\boldsymbol{u}_{\mathcal{F}}=0 & \mathrm{in} & \mathcal{Q}_{%
\mathcal{F}(t)}, \\
\boldsymbol{u}_{\mathcal{F}}=\boldsymbol{0} & \mathrm{on} & \partial
\mathcal{O}\times \mathbb{R}^{+} , \\
(\boldsymbol{u}_{\mathcal{F}}-\boldsymbol{u}_{\mathcal{B}})\cdot \boldsymbol{%
n}=0,\qquad 2\mu \lbrack \mathbb{D}(\boldsymbol{u}_{\mathcal{F}})\boldsymbol{%
n}]\times \boldsymbol{n}=-\beta (\boldsymbol{u}_{\mathcal{F}}-\boldsymbol{u}%
_{\mathcal{B}})\times \boldsymbol{n} & \mathrm{on} & \mathcal{Q}_{\partial
\mathcal{B}(t)}, \\
\boldsymbol{u}_{\mathcal{F}}(0)=\boldsymbol{u}_{0} & \mathrm{in} & \mathcal{F%
}_{0},%
\end{array}%
\right.  \label{fluidmotion}
\end{equation}%
where $\boldsymbol{u}_{\mathcal{F}}$ and $p_{\mathcal{F}}$ denote the
velocity and the pressure of the fluid and $\boldsymbol{u}_{\mathcal{B}}$ is
the full velocity of the rigid body. We recall that the rate of the strain
tensor of the fluid and its stress tensor are defined by
\begin{equation*}
\mathbb{D}(\boldsymbol{u}_{\mathcal{F}})=\frac{1}{2}(\nabla \boldsymbol{u}_{%
\mathcal{F}}\,+\,(\nabla \boldsymbol{u}_{\mathcal{F}})^{T})\qquad \text{and}%
\qquad \mathbb{T}(\boldsymbol{u}_{\mathcal{F}},p_{\mathcal{F}})=2\mu \,%
\mathbb{D}(\boldsymbol{u}_{\mathcal{F}})-p_{\mathcal{F}}\mathbb{I},
\end{equation*}%
with $\mu >0$ being the viscosity of the fluid, and $\beta> 0$ is the slip
length.

The fluid equations are coupled to the following balance equations for the
translation velocity $\boldsymbol{\eta }$ and the angular velocity $%
\boldsymbol{\omega }$ of the body,
\begin{equation}
\left\{
\begin{array}{ccc}
m\,\boldsymbol{\eta }^{\prime }(t)+\int_{\partial \mathcal{B}(t)}\mathbb{T}(%
\boldsymbol{u}_{\mathcal{F}},p_{\mathcal{F}})(t,\boldsymbol{x})\boldsymbol{n}%
(t,\boldsymbol{x})\,\mathrm{d}\bm{\sigma }=\boldsymbol{f}_{1}(t), &  &  \\
(\boldsymbol{J}\,\boldsymbol{\omega })^{\prime }(t)\,+\,\int_{\partial
\mathcal{B}(t)}(\boldsymbol{x}-\boldsymbol{x}_{c}(t))\times \mathbb{T}(%
\boldsymbol{u}_{\mathcal{F}},p_{\mathcal{F}})(t,\boldsymbol{x})\boldsymbol{n}%
(t,\boldsymbol{x})\,\mathrm{d}\bm{\sigma }=\boldsymbol{f}_{2}(t) & \mathrm{%
for} & t\in \mathbb{R}^{+}, \\
\boldsymbol{\eta }(0)=\boldsymbol{\eta }_{0},\qquad \boldsymbol{\omega }(0)=%
\boldsymbol{\omega }_{0}, &  &
\end{array}%
\right.  \label{bodymotion}
\end{equation}%
where $m=\rho _{\mathcal{B}}|\mathcal{B}_{0}|$ and $\rho _{\mathcal{B}}$\
are the mass and the constant density of the body, \ $\boldsymbol{x}_{c}$ is
the position of its center of gravity,%
\begin{equation*}
{\boldsymbol{J}}=\rho _{\mathcal{B}}\int_{\mathcal{B}(t)}(|\mathbf{x}-%
\boldsymbol{x}_{c}(t)|^{2}\mathbb{I}-(\mathbf{x}-\boldsymbol{x}%
_{c}(t))\otimes (\mathbf{x}-\boldsymbol{x}_{c}(t)))\,d\mathbf{x}
\end{equation*}%
is the matrix of the inertia moments of the body $\mathcal{B}(t).$ \ The
full velocity of the rigid body is given by
\begin{equation*}
\boldsymbol{u}_{\mathcal{B}}(t,\boldsymbol{x})=\boldsymbol{\eta }(t,%
\boldsymbol{x})+\boldsymbol{\omega }(t)\times (\boldsymbol{x}-\boldsymbol{x}%
_{c}(t)).
\end{equation*}%
The functions $\boldsymbol{f}_{0}$ \ and $\boldsymbol{f}_{1},\ \boldsymbol{f}%
_{2}$ denote the external force and the torques, respectively.

Let us mention that the problem of the motion of one or several rigid bodies
in a viscous fluid filling a bounded domain was investigated by several
authors \cite{CST, DEES1, DEES2, HOST}. In all articles mentioned a non-slip
boundary condition has been considered on the boundaries of the bodies and
of the domain. Hesla \cite{HES} and Hillairet \cite{HIL} have shown that
this condition gives a very paradoxical result of no collisions between the
bodies and the boundary of the domain.

Our article is devoted to the problem of the motion of the rigid body in the
viscous fluid when a slippage is allowed at the fluid-body interface $%
\partial \mathcal{B}(t)$ and a Dirichlet boundary condition on $\partial
\mathcal{O}$. The slippage is prescribed by the Navier boundary condition,
having only the continuity of velocity just in the normal component. We
stress that taking into account slip boundary condition at the interface is
very natural within this model, since the classical Dirichlet boundary
condition leads to unrealistic collision behaviour between the solid and the
domain boundary. Nevertheless, due to the slip condition, the velocity field
is discontinuous across the fluid-solid interface. This makes many aspects
of the theory of weak solutions for Dirichlet conditions inappropriate. It
is worth noting that the case of bounded fluid domain $\mathcal{O}$
furnishes additional difficulty of possible contacts of body and wall. For
this reason, the body needs to start at some distance from the boundary.
Furthermore the lifespan of the solution has to be restricted to a time
interval in which no contacts occur.

To our knowledge the first solvability result was obtained by Neustupa and
Penel \cite{NP1}, \cite{NP2} \ in a particular situation, where they
considered a prescribed collision of a ball with a wall, when the slippage
was allowed on both boundaries. Their pioneer result shows that the slip
boundary condition cleans the no-collision paradox. Recently Gérard--Varet,
Hillairet \cite{GH2} have proved a local-in-time existence result (up to
collisions). The authors of \cite{GHC} have investigated the free fall of a
sphere above a wall, that is when the boundaries are $C^{\infty }$-smooth,
in a viscous incompressible fluid in two different situations: \textit{Mixed
case:} the Navier boundary condition is prescribed on the boundary of the
body and the non-slip boundary condition on the boundary of the domain;
\textit{Slip case:} the Navier boundary conditions are prescribed on both
boundaries, i.e. of the body and of the domain. The result of them is
interesting, saying that in the \textit{Mixed case } \ the sphere never
touches the wall and in the \textit{Slip case } the sphere reaches the wall
during a finite time period.

Recently, the global existence result for a weak solution was proven in the
mixed case, see \cite{CN}, even if the collisions of the body with the
boundary of domain occur in a finite time under a lower regularity of the
body and domain than \cite{GHC}. Our article deals with the strong solution
of the Mixed case. The existence of strong solution was studied by
Takahashi, and Tucsnak \cite{T, TT} in the no-slip boundary conditions and
in the Slip case by Wang \cite{Wa} {\ in the 2D case}.

The plan of the paper is as follows. In Section \ref{Prel} we introduce the
local transformation as in Inoue and Wakimoto \cite{IW}, we define the
functional framework at the basis of our work, we recall also the main
result of this work. Next in Section 3 we prove the existence of solution to
the linearised problem, we consider the non linear problem and we prove the
existence of solution using a fixed point argument.

\section{Preliminaries}

\label{Prel}

\subsection{Local transformation}

\label{Sec:LT} {Since the domain depends on the motion of the rigid body, we
transform the problem to a fixed domain.} There are at least two
possibilities for this transform: the global transformation (cf. \cite{G2,S}%
) is linear, meaning that the whole space is rigidly rotated and shifted
back to its original position at every time $t>0$. A fundamental difficulty
of this approach is that the transformed problem in case of the exterior
domain brings additional terms which are not local perturbation to parabolic
equations and completely change the character of equations. The second one
(cf. \cite{IW}) is characterized by a non-linear local change of coordinates
which only acts in a suitable bounded neighbourhood of the obstacle. The
advantage of the later transform is that it preserves the solenoidal
condition on the fluid velocity, doesn't change the regularity of the
solutions. However the rigid body equations change to become non-linear. Our
analysis is based on the second approach. We define the local transformation
introduced by Inoue and Wakimoto \cite{IW}.

Let $\delta (t)=\mbox{dist }(\mathcal{B}(t),\partial \mathcal{O})$. We fix $%
\delta _{0},$ such that $\delta (t)>\delta _{0},$ and define a $C^{\infty }-$%
smooth solenoidal velocity field $\mathbf{\Lambda }=\mathbf{\Lambda }(t,%
\boldsymbol{x}),$ defined for $t\in \mathbb{R}^{+},$\ $\ \boldsymbol{x}\in
\mathcal{O},$\ satisfying%
\begin{equation*}
\mathbf{\Lambda }(t,\boldsymbol{x})=\left\{
\begin{array}{cc}
0 & \text{\textrm{in the} }\delta _{0}/4\text{ }\mathrm{neighbourhood}\text{
}\mathrm{of}\text{ }\partial \mathcal{O}, \\
\boldsymbol{\eta }(t)+\boldsymbol{\omega }(t)\times (\boldsymbol{x}-%
\boldsymbol{x}_{c}(t)) & \text{\textrm{in the} }\delta _{0}/4\text{ }\mathrm{%
neighbourhood}\text{ }\mathrm{of}\text{ }\mathcal{B}(t).%
\end{array}%
\right.
\end{equation*}%
Then the flow $\boldsymbol{X}(t):\mathcal{O}\rightarrow \mathcal{O}$ is
defined as the solution of the system
\begin{equation}
\,\left.
\begin{array}{c}
\frac{d}{dt}\boldsymbol{X}(t,\boldsymbol{y})=\mathbf{\Lambda }(t,\boldsymbol{%
X}(t,\boldsymbol{y})),\quad \boldsymbol{X}(0,\boldsymbol{y})=\boldsymbol{y}%
,\qquad \forall \boldsymbol{y}\in \mathcal{O}.%
\end{array}%
\right.  \label{eq:3.1}
\end{equation}%
From the results of Takahashi \cite[Lemma 4.2]{T} it follows that (\ref%
{eq:3.1}) has a unique solution. Moreover, the mapping $\boldsymbol{X}$ is a
$C^{\infty }$ diffeomorphism for $\mathcal{O}$ and itself and a
diffeomorphism from $\mathcal{F}_{0}$ onto $\mathcal{F}(t)$ such that the
derivatives
\begin{equation*}
\frac{\partial ^{i+\alpha _{j}}\boldsymbol{X}(t,\boldsymbol{y})}{\partial
t^{i}\partial y_{j}^{\alpha _{j}}},\qquad i\leq 1,\quad \forall \,\alpha
_{j}\geq 0,\quad j=1,...,d,
\end{equation*}%
exist and are continuous. \ Further, denoting $\boldsymbol{Y}$ as the
inverse of $\boldsymbol{X}$ from \cite[Lemma 4.2]{T} it follows that $%
\boldsymbol{Y}$ has also all continuous derivatives%
\begin{equation*}
\frac{\partial ^{i+\alpha _{j}}\boldsymbol{Y}(t,\boldsymbol{x})}{\partial
t^{i}\partial x_{j}^{\alpha _{j}}},\qquad i\leq 1,\quad \forall \,\alpha
_{j}\geq 0,\quad j=1,...,d.
\end{equation*}

Now we introduce the new unknown functions, defined for $\ t\in \mathbb{R}%
^{+}$ \ and $\ \boldsymbol{y}\in \mathcal{O},$%
\begin{eqnarray*}
\widetilde{\boldsymbol{u}}_{\mathcal{F}}(t,\boldsymbol{y}) &=&\mathcal{J}_{%
\boldsymbol{Y}}(t,\boldsymbol{X}(t,\boldsymbol{y}))\boldsymbol{u}_{\mathcal{F%
}}(t,\boldsymbol{X}(t,\boldsymbol{y})),\qquad \widetilde{p}_{\mathcal{F}}(t,%
\boldsymbol{y})=p_{\mathcal{F}}(t,\boldsymbol{X}(t,\boldsymbol{y})), \\
\mathcal{\boldsymbol{T}}(\widetilde{\boldsymbol{u}}_{\mathcal{F}}(t,%
\boldsymbol{y}), \widetilde{p}_{\mathcal{F}}(t,\boldsymbol{y})) &=&%
\boldsymbol{Q}^{T}(t)\mathbb{T}\big(\boldsymbol{Q}(t)\widetilde{\boldsymbol{u%
}}_{\mathcal{F}}(t,\boldsymbol{y}),\widetilde{p}_{\mathcal{F}}(t,\boldsymbol{%
y})\big)\boldsymbol{Q}(t), \\
\widetilde{\boldsymbol{f}_{0}}(t,\boldsymbol{y}) &=&\mathcal{J}_{\boldsymbol{%
Y}}(t,\boldsymbol{X}(t,\boldsymbol{y}))\boldsymbol{f}_{0}(t,\boldsymbol{X}(t,%
\boldsymbol{y})), \\
&& \\
\widetilde{\boldsymbol{\omega }}(t) &=&\boldsymbol{Q}^{T}(t)\boldsymbol{%
\omega }(t),\qquad \widetilde{\boldsymbol{\eta }}(t)=\boldsymbol{Q}^{T}(t)%
\boldsymbol{\eta }(t), \\
\widetilde{\boldsymbol{f}}_{1}(t) &=&\boldsymbol{Q}^{T}(t)\boldsymbol{f}%
_{1}(t),\qquad \widetilde{\boldsymbol{f}}_{2}(t)=\boldsymbol{Q}^{T}(t)%
\boldsymbol{f}_{2}(t),
\end{eqnarray*}%
where $\mathcal{J}_{\boldsymbol{Y}}(t,\boldsymbol{x})=\Big(\frac{\partial
Y_{i}(t,\boldsymbol{x})}{\partial x_{j}}\Big)$\ and $\ \boldsymbol{Q}(t)\in
SO(3)$ is a rotation matrix associated with the rigid body angular velocity $%
\boldsymbol{\omega }$. The transformed normal $\widetilde{\boldsymbol{n}}$
on $\partial \mathcal{B}_{0}$ satisfies $\widetilde{\boldsymbol{n}}=%
\boldsymbol{Q}^{T}(t)\boldsymbol{n}(t)$. The transformed inertia tensor $%
\boldsymbol{I}=\boldsymbol{Q}^{T}(t)\boldsymbol{J}(t)\boldsymbol{Q}(t)$ no
longer depend on time. Furthermore the transformed total force and torque on
the rigid body are given by
\begin{eqnarray*}
\int_{\partial \mathcal{B}(t)}\mathbb{T}(\boldsymbol{u}_{\mathcal{F}},p_{%
\mathcal{F}})\boldsymbol{n}(t)\,\mathrm{d}\bm{\sigma } &=&\boldsymbol{Q}%
\int_{\partial \mathcal{B}_{0}}\boldsymbol{\mathcal{T}}(\widetilde{%
\boldsymbol{u}}_{\mathcal{F}},\widetilde{p}_{\mathcal{F}})\widetilde{%
\boldsymbol{n}}\,\mathrm{d}\bm{\sigma }(\boldsymbol{y}), \\
\int_{\partial \mathcal{B}(t)}(\boldsymbol{x}-\boldsymbol{x}_{c}(t))\times
\mathbb{T}(\boldsymbol{u}_{\mathcal{F}},p_{\mathcal{F}})\boldsymbol{n}(t)\,%
\mathrm{d}\bm{\sigma } &=&\boldsymbol{Q}\int_{\partial \mathcal{B}_{0}}%
\boldsymbol{y}\times \boldsymbol{\mathcal{T}}(\widetilde{\boldsymbol{u}}_{%
\mathcal{F}},\widetilde{p}_{\mathcal{F}})\widetilde{\boldsymbol{n}}\,\mathrm{%
d}\bm{\sigma }(\boldsymbol{y}).
\end{eqnarray*}%
Thus for some $T>0,$ \ that will be founded later on,\ the new unknowns $%
\widetilde{\boldsymbol{u}}_{\mathcal{F}}$, $\widetilde{p}_{\mathcal{F}}$ and
$\widetilde{\boldsymbol{\eta }},$ $\widetilde{\boldsymbol{\omega }},$\
defined on the cylindrical domains $(0,T)\times \mathcal{F}_{0}$ and $%
(0,T)\times \mathcal{B}_{0},$\ satisfy the following system of equations%
\begin{equation}
\left\{
\begin{array}{ccc}
\partial _{t}\widetilde{\boldsymbol{u}}_{\mathcal{F}}\,+\,(\mathcal{M}-\mu
\mathcal{L})\widetilde{\boldsymbol{u}}_{\mathcal{F}}\,+\,\mathcal{N}%
\widetilde{\boldsymbol{u}}_{\mathcal{F}}+\,\mathcal{G}\widetilde{p}_{%
\mathcal{F}}\,=\,\widetilde{\boldsymbol{f}_{0}},\qquad \mathrm{div}\,%
\widetilde{\boldsymbol{u}}_{\mathcal{F}}\,=\,0 & \mathrm{in} & (0,T)\times
\mathcal{F}_{0}, \\
\widetilde{\boldsymbol{u}}_{\mathcal{F}}\,=\,\mathbf{0}\quad \mathrm{on}%
\quad (0,T)\times \partial \mathcal{O},\qquad \widetilde{\boldsymbol{u}}_{%
\mathcal{F}}(0)=\boldsymbol{u}_{0}\quad \mathrm{in}\quad \mathcal{F}_{0}, &
&  \\
&  &  \\
(\widetilde{\boldsymbol{u}}_{\mathcal{F}}-\widetilde{\boldsymbol{u}}_{%
\mathcal{B}})\cdot \widetilde{\boldsymbol{n}}=\boldsymbol{0},\qquad 2\mu
\lbrack \mathbb{D}(\widetilde{\boldsymbol{u}}_{\mathcal{F}})\widetilde{%
\boldsymbol{n}}]\times \widetilde{\boldsymbol{n}}=-\beta (\widetilde{%
\boldsymbol{u}}_{\mathcal{F}}-\widetilde{\boldsymbol{u}}_{\mathcal{B}%
})\times \widetilde{\boldsymbol{n}} & \mathrm{on} & (0,T)\times \partial
\mathcal{B}_{0}, \\
&  &  \\
m\,\widetilde{\boldsymbol{\eta }}^{\prime }\,\,-m\,(\widetilde{\boldsymbol{%
\omega }}\times \widetilde{\boldsymbol{\eta }})\,+\int_{\partial \mathcal{B}%
_{0}}\boldsymbol{\mathcal{T}}(\widetilde{\boldsymbol{u}}_{\mathcal{F}},%
\widetilde{p}_{\mathcal{F}})\widetilde{\boldsymbol{n}}\,\mathrm{d}\bm{\sigma
}=\widetilde{\boldsymbol{f}}_{1}(t), &  &  \\
\mathbf{I}\widetilde{\boldsymbol{\omega }}^{\prime }\,-\,\widetilde{%
\boldsymbol{\omega }}\times (\mathbf{I}\widetilde{\boldsymbol{\omega }}%
)\,+\,\int_{\partial \mathcal{B}_{0}}\boldsymbol{y}\times \boldsymbol{%
\mathcal{T}}(\widetilde{\boldsymbol{u}}_{\mathcal{F}},\widetilde{p}_{%
\mathcal{F}})\widetilde{\boldsymbol{n}}\,\mathrm{d}\bm{\sigma }=\widetilde{%
\boldsymbol{f}}_{2}(t), & \mathrm{for}\, & t\in (0,T), \\
\widetilde{\boldsymbol{\eta }}(0)=\boldsymbol{\eta }_{0},\qquad \widetilde{%
\boldsymbol{\omega }}(0)=\boldsymbol{\omega }_{0} &  &
\end{array}%
\right.  \label{modifiedsystem}
\end{equation}%
with $\widetilde{\boldsymbol{u}}_{\mathcal{B}}=\widetilde{\boldsymbol{\eta }}%
+\widetilde{\boldsymbol{\omega }}\times \boldsymbol{y}$ and the convection
term is transformed into
\begin{equation*}
(\mathcal{N}\mathbf{u})_{i}=\sum_{j=1}^{d}u_{j}\partial
_{j}u_{i}+\sum_{j,k+1}^{d}\Gamma _{jk}^{i}u_{j}u_{k},\qquad i=1,...,d.
\end{equation*}%
\ The transformed time derivative $\mathcal{M}\boldsymbol{u}$ and the
gradient $\mathcal{G}p$ are calculated by
\begin{equation*}
(\mathcal{M}\mathbf{u})_{i}=\sum_{j=1}^{d}\dot{Y}_{j}\partial
_{j}u_{i}+\sum_{j,k=1}^{d}\left( \Gamma _{jk}^{i}\dot{Y}_{k}+(\partial
_{k}Y_{i})(\partial _{j}\dot{X}_{k})\right) u_{j},\qquad (\mathcal{G}%
p)_{i}=\sum_{j=1}^{d}g^{ij}\partial _{j}p.
\end{equation*}%
Moreover the operator $\mathcal{L}$ denotes the transformed Laplace
operator, having the components
\begin{equation*}
(\mathcal{L}\mathbf{u})_{i}=\sum_{j,k=1}^{d}\partial _{j}(g^{jk}\partial
u_{i})+2\sum_{j,k,l=1}^{d}g^{kl}\Gamma _{jk}^{i}\partial
_{l}u_{j}+\sum_{j,k,l=1}^{d}\left( \partial _{k}(g^{kl}\Gamma
_{kl}^{i})+\sum_{m=1}^{n}g^{kl}\Gamma _{jl}^{m}\Gamma _{km}^{i}\right) u_{j}.
\end{equation*}%
The coefficients are given by the metric covariant tensor \ $%
g_{ij}=X_{k,i}X_{k,j},$ \ \ the metric contra-variant tensor \ $%
g^{ij}=Y_{i,k}Y_{j,k}$ \ \ \ and the Christoffel symbols
\begin{equation*}
\Gamma _{ij}^{k}=\frac{1}{2}g^{kl}(g_{il,j}+g_{jl,i}-g_{ij,l}).
\end{equation*}%
It is easy to observe that in particular it holds $\Gamma
_{ij}^{k}=Y_{k,l}X_{l,ij}.$ \ As described in \cite{IW}, problem %
\eqref{fluidmotion}--\eqref{bodymotion} is equivalent to problem %
\eqref{modifiedsystem} and a solution to the transformed problem %
\eqref{modifiedsystem} yields a solution to the initial problem %
\eqref{fluidmotion}--\eqref{bodymotion}.

\bigskip

\subsection{Function spaces and the main theorem}

In the sequel we use the following function spaces, defined on the moving
domain $(0,T)\times \mathcal{F}(t),$%
\begin{equation*}
L^{2}(0,T;H^{2}(\mathcal{F}(t))),\qquad C([0,T];H^{1}(\mathcal{F}%
(t))),\qquad H^{1}(0,T;L^{2}(\mathcal{F}(t))),\qquad L^{2}(0,T;H^{1}(%
\mathcal{F}(t))).
\end{equation*}%
If we consider $\boldsymbol{U}_{\mathcal{F}}(t,\boldsymbol{y}):\mathcal{F}%
_{0}\rightarrow \mathbb{R}^{d},$ which is calculated as
\begin{equation*}
\boldsymbol{U}_{\mathcal{F}}(t,\boldsymbol{y})=\boldsymbol{u}_{\mathcal{F}%
}(t,\boldsymbol{X}(t,\boldsymbol{y}))\qquad \text{for any function\quad }%
\boldsymbol{u}_{\mathcal{F}}(t,\cdot ):\mathcal{F}(t)\rightarrow \mathbb{R}%
^{d},
\end{equation*}%
then above mentioned function spaces can be redefined in the fixed domain $%
(0,T)\times \mathcal{F}_{0}.$ For instance
\begin{equation*}
L^{2}(0,T;H^{2}(\mathcal{F}_{0}))=\{\boldsymbol{U}_{\mathcal{F}}:\boldsymbol{%
u}_{\mathcal{F}}\in L^{2}(0,T;H^{2}(\mathcal{F}(t)))\}.
\end{equation*}

\bigskip

Now we can formulate the main result.

\begin{teo}
\label{th:2.1} Suppose that $\overline{\mathcal{B}_{0}}\subset \mathcal{O}$\
and
\begin{eqnarray}
\boldsymbol{u}_{0} &\in &H^{1}(\mathcal{F}_{0}),\qquad \boldsymbol{u}_{%
\mathcal{B},0}={\boldsymbol{\eta }_{0}+\boldsymbol{\omega }_{0}\times (%
\boldsymbol{x}-\boldsymbol{x}_{c}(0))}\in H^{1}(\mathcal{B}_{0}),  \notag \\
\boldsymbol{f}_{0} &\in &L_{loc}^{2}(\mathbb{R}^{+};H^{1}(\mathcal{F}%
_{0})),\qquad \boldsymbol{f}_{1},\boldsymbol{f}_{2}\in L_{loc}^{2}(\mathbb{R}%
^{+}),  \label{R}
\end{eqnarray}%
that satisfy
\begin{equation*}
(\boldsymbol{u}_{0}-\boldsymbol{u}_{\mathcal{B},0})\cdot \boldsymbol{n}%
|_{\partial \mathcal{B}_{0}}=0,\qquad \boldsymbol{u}_{0}|_{\partial \mathcal{%
O}}=\mathbf{0},\qquad \mathrm{div}\ \boldsymbol{u}_{0}=0\qquad \mathrm{in}%
\quad \mathcal{F}_{0}.
\end{equation*}%
Then there exists $T_{0}>0$ such that \eqref{fluidmotion}-\eqref{bodymotion}
has a unique solution which satisfies for all $T<T_{0}$
\begin{equation*}
\boldsymbol{u}_{\mathcal{F}},\,\ p_{\mathcal{F}},\,\ \boldsymbol{\eta }(t),%
\boldsymbol{\omega }(t)\in \mathcal{U}_{T}(\mathcal{F}(t))\times
L^{2}(0,T;H^{1}(\mathcal{F}(t)))\times H^{1}(0,T)\times H^{1}(0,T),
\end{equation*}%
where
\begin{equation*}
\mathcal{U}_{T}(\mathcal{F}(t))=L^{2}(0,T;H^{2}(\mathcal{F}(t)))\cap
C(0,T;H^{1}(\mathcal{F}(t)))\cap H^{1}(0,T;L^{2}(\mathcal{F}(t))).
\end{equation*}
\end{teo}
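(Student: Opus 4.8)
The plan is to construct the solution on the fixed reference configuration, that is, to solve the transformed system \eqref{modifiedsystem} on a short time interval $(0,T)$ and then transport the solution back by the flow $\boldsymbol{X}(t)$ of Section~\ref{Sec:LT}. Because $\boldsymbol{X}$ and its inverse $\boldsymbol{Y}$ are $C^{\infty}$ diffeomorphisms with the space--time regularity recalled above (from \cite[Lemma 4.2]{T}), a solution of \eqref{modifiedsystem} in the class $\mathcal{U}_{T}(\mathcal{F}_{0})\times L^{2}(0,T;H^{1}(\mathcal{F}_{0}))\times H^{1}(0,T)\times H^{1}(0,T)$ yields, after composition with $\boldsymbol{X}$, a solution of \eqref{fluidmotion}--\eqref{bodymotion} in precisely the class claimed, and uniqueness is transported in the same way. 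The restriction $T<T_{0}$ is what keeps $\delta(t)>\delta_{0}$, hence $\boldsymbol\Lambda$, $\boldsymbol{X}$ and the geometric coefficients $g^{ij}$, $\Gamma^{i}_{jk}$, $\dot X$, $\dot Y$ entering $\mathcal{M}$, $\mathcal{L}$, $\mathcal{N}$, $\mathcal{G}$ are well defined and, for $T$ small, uniformly close in the relevant norms to their values at $t=0$ (in particular $g^{ij}$ close to $\delta^{ij}$ and $\Gamma^{i}_{jk}$ small).

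The first step is the linear theory. I would freeze the geometry and drop the genuinely nonlinear terms, replacing $\mathcal{N}\widetilde{\boldsymbol{u}}_{\mathcal{F}}$, the gyroscopic terms $m(\widetilde{\boldsymbol\omega}\times\widetilde{\boldsymbol\eta})$ and $\widetilde{\boldsymbol\omega}\times(\mathbf{I}\widetilde{\boldsymbol\omega})$ by prescribed right-hand sides, and study the resulting linear fluid--body system with the mixed boundary conditions. The natural setting is the Hilbert space of pairs (fluid velocity on $\mathcal{F}_{0}$, rigid velocity $\widetilde{\boldsymbol\eta}+\widetilde{\boldsymbol\omega}\times\boldsymbol{y}$ on $\mathcal{B}_{0}$) that are divergence free in $\mathcal{F}_{0}$, vanish on $\partial\mathcal{O}$, and whose \emph{normal} traces match on $\partial\mathcal{B}_{0}$, the tangential parts being free because of the slip. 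On this space the dissipation form $2\mu\int_{\mathcal{F}_{0}}|\mathbb{D}(\widetilde{\boldsymbol{u}}_{\mathcal{F}})|^{2}\,d\boldsymbol{y}+\beta\int_{\partial\mathcal{B}_{0}}|(\widetilde{\boldsymbol{u}}_{\mathcal{F}}-\widetilde{\boldsymbol{u}}_{\mathcal{B}})\times\widetilde{\boldsymbol{n}}|^{2}\,\mathrm{d}\bm\sigma$ is continuous and coercive --- coercivity resting on a Korn inequality adapted to the slip condition, in which the $\beta$-term is exactly what controls the rigid part of the velocity --- so the associated operator is sectorial and generates an analytic semigroup. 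Maximal $L^{2}$-regularity then yields, for $\widetilde{\boldsymbol{f}}_{0}\in L^{2}(0,T;L^{2}(\mathcal{F}_{0}))$, $\widetilde{\boldsymbol{f}}_{1},\widetilde{\boldsymbol{f}}_{2}\in L^{2}(0,T)$ and initial data in the trace space $H^{1}$ (here the compatibility conditions on $\boldsymbol{u}_{0}$ in the statement are used), a unique solution with $\partial_{t}\widetilde{\boldsymbol{u}}_{\mathcal{F}}\in L^{2}(0,T;L^{2})$ and $\widetilde{\boldsymbol{u}}_{\mathcal{F}}\in C([0,T];H^{1})$; the pressure $\widetilde{p}_{\mathcal{F}}\in L^{2}(0,T;H^{1})$ is recovered from the momentum equation by a de Rham argument, and $\widetilde{\boldsymbol{u}}_{\mathcal{F}}\in L^{2}(0,T;H^{2})$ follows from elliptic regularity for the stationary Stokes system with Navier-slip/Dirichlet data at almost every time. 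The variable-coefficient operator $\mathcal{M}-\mu\mathcal{L}+\mathcal{G}$ is then incorporated by writing it as the constant-coefficient operator plus a perturbation whose coefficients are small on a short interval, absorbed by the maximal-regularity estimate.

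With the linear solution operator $\mathcal{S}$ available, the second step is a contraction argument for \eqref{modifiedsystem}. Fix $R>0$ (comparable to the data norms) and, for small $T$, consider on the closed ball $B_{R}$ of $\mathcal{U}_{T}(\mathcal{F}_{0})\times H^{1}(0,T)\times H^{1}(0,T)$ the map $\Phi$ that sends $(\overline{\boldsymbol{u}},\overline{\boldsymbol\eta},\overline{\boldsymbol\omega})$ to $\mathcal{S}$ applied to the data in which the nonlinear terms $\mathcal{N}\overline{\boldsymbol{u}}$, $m(\overline{\boldsymbol\omega}\times\overline{\boldsymbol\eta})$, $\overline{\boldsymbol\omega}\times(\mathbf{I}\overline{\boldsymbol\omega})$ and the coefficient-defect terms are evaluated along the flow generated by $\overline{\boldsymbol\eta},\overline{\boldsymbol\omega}$. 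Using the embeddings $\mathcal{U}_{T}\hookrightarrow L^{\infty}(0,T;H^{1})$ and, for $d\le 3$, $\mathcal{U}_{T}\hookrightarrow L^{4}(0,T;L^{\infty})$, together with bilinear estimates such as $\|\mathcal{N}\boldsymbol{u}-\mathcal{N}\boldsymbol{v}\|_{L^{2}(0,T;L^{2})}\le C\,T^{\theta}(\|\boldsymbol{u}\|_{\mathcal{U}_{T}}+\|\boldsymbol{v}\|_{\mathcal{U}_{T}})\|\boldsymbol{u}-\boldsymbol{v}\|_{\mathcal{U}_{T}}$ with some $\theta>0$, and the Lipschitz dependence of the geometric coefficients on $(\overline{\boldsymbol\eta},\overline{\boldsymbol\omega})$ over a short time, one checks that for a suitable $R$ and then $T=T(R)$ small enough, $\Phi$ maps $B_{R}$ into itself and is a strict contraction. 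Banach's theorem gives a unique local strong solution of \eqref{modifiedsystem}, which a standard continuation argument extends to a maximal interval $(0,T_{0})$, $T_{0}$ being the first time at which the body comes within $\delta_{0}$ of $\partial\mathcal{O}$ or the $H^{1}$-norm of the velocity blows up.

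The decisive difficulty is the linear maximal-regularity step with the genuinely mixed boundary conditions: one must construct the correct solenoidal ``fluid${}+{}$body'' space in which only the normal trace is matched on $\partial\mathcal{B}_{0}$, prove the Korn-type coercivity of the slip dissipation form on it, deduce analyticity and maximal $L^{2}$-regularity of the generated operator, and --- separately, and less standard than in the Dirichlet case --- recover the pressure and the $H^{2}$-regularity for the stationary Navier-slip Stokes problem. Once that is in hand, the variable coefficients and the quadratic terms are handled by the classical short-time smallness and contraction mechanism.
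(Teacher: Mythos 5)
Your proposal follows essentially the same route as the paper: the Inoue--Wakimoto change of variables to the fixed domain, the coupled fluid--body Hilbert space with only the normal traces matched on $\partial\mathcal{B}_{0}$, the self-adjoint/analytic-semigroup and maximal $L^{2}$-regularity treatment of the linearized mixed-boundary Stokes--rigid-body system (with the pressure recovered by de Rham and $H^{2}$-regularity from the stationary Navier-slip Stokes problem, for which the paper invokes Shibata--Shimada), followed by a Banach fixed point absorbing the coefficient defects and quadratic terms via short-time smallness. The argument is correct and matches the paper's proof in all essential steps.
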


\bigskip

\bigskip

\section{Strong solution}

\label{strongsolusect}

\subsection{Stokes problem}

We will consider the following linearized system, which couples Stokes type
equations and linear ordinary differential equations,%
\begin{equation}
\left\{
\begin{array}{ccc}
\partial _{t}\,\mathbf{z}_{\mathcal{F}}-\mu \,\Delta \mathbf{z}_{\mathcal{F}%
}\,+\,\nabla q_{\mathcal{F}}\,=\,\boldsymbol{F}_{0},\qquad \mathrm{div}\,%
\mathbf{z}_{\mathcal{F}}\,=\,0 & \mathrm{in} & (0,T)\times \mathcal{F}_{0},
\\
\mathbf{z}_{\mathcal{F}}\,=\,\mathbf{0}\quad \mathrm{on}\quad (0,T)\times
\partial \mathcal{O},\qquad \mathbf{z}_{\mathcal{F}}(0)=\boldsymbol{u}%
_{0}\quad \mathrm{in}\quad \mathcal{F}_{0}, &  &  \\
&  &  \\
(\mathbf{z}_{\mathcal{F}}-\mathbf{z}_{\mathcal{B}})\cdot \widetilde{%
\boldsymbol{n}}\,=\,0,\qquad 2\mu \lbrack \mathbb{D}(\mathbf{z}_{\mathcal{F}%
})\widetilde{\boldsymbol{n}}]\times \widetilde{\boldsymbol{n}}=-\beta (%
\mathbf{z}_{\mathcal{F}}-\mathbf{z}_{\mathcal{B}})\times \widetilde{%
\boldsymbol{n}}\, & \mathrm{on} & (0,T)\times \partial \mathcal{B}_{0}, \\
&  &  \\
m\,\boldsymbol{\xi }^{\prime }\,+\,\int_{\partial \mathcal{B}_{0}}\mathbb{T}(%
\mathbf{z}_{\mathcal{F}},q_{\mathcal{F}})\widetilde{\boldsymbol{n}}\,\mathrm{%
d}\bm{\sigma }\,=\mathbf{F}_{1}, &  &  \\
{\boldsymbol{I}}\,\mathbf{w}^{\prime }\,+\,\int_{\partial \mathcal{B}_{0}}%
\boldsymbol{y}\times \mathbb{T}(\mathbf{z}_{\mathcal{F}},q_{\mathcal{F}})%
\widetilde{\boldsymbol{n}}\,\mathrm{d}\bm{\sigma }\,=\mathbf{F}_{2}, &
\mathrm{for} & t\in (0,T), \\
\boldsymbol{\xi }(0)=\boldsymbol{\eta }_{0},\qquad \mathbf{w}(0)=\boldsymbol{%
\omega }_{0} &  &
\end{array}%
\right.  \label{lin}
\end{equation}%
with $\mathbf{z}_{\mathcal{B}}=\boldsymbol{\xi }+\mathbf{w}\times
\boldsymbol{y}.$

Let us recall a well-known result (see Kato \cite{K1,K2}).

\begin{proposition}
\label{abstractresult} Let $H$ be a Hilbert space. Let $\mathbb{A}:D(\mathbb{%
A})\rightarrow H$ be a self adjoint and accretive operator. If $\
\boldsymbol{F}\in L^{2}(0,T;H)$, $\boldsymbol{u}_{0}\in D(\mathbb{A}^{1/2})$%
, then the problem
\begin{equation*}
\boldsymbol{u}^{\prime }+\mathbb{A}\boldsymbol{u}=\boldsymbol{F},\qquad
\boldsymbol{u}(0)=\boldsymbol{u}_{0},
\end{equation*}%
has a unique solution $\boldsymbol{u}\in L^{2}(0,T;D(\mathbb{A}))\cap
C([0,T];D(\mathbb{A}^{1/2}))\cap H^{1}(0,T;H),$ which satisfies
\begin{equation*}
\Vert \boldsymbol{u}\Vert _{L^{2}(0,T;D(\mathbb{A}))}+\Vert \boldsymbol{u}%
\Vert _{C([0,T];D(\mathbb{A}^{1/2}))}+\Vert \boldsymbol{u}\Vert
_{H^{1}(0,T;H)}\leq C(\Vert \boldsymbol{u}_{0}\Vert _{D(\mathbb{A}%
^{1/2})}+\Vert \boldsymbol{F}\Vert _{L^{2}(0,T;H)})
\end{equation*}%
with a constant $C$ depending on the operator $\mathbb{A}$ and the time $T$.
Moreover, the constant $C$ is a non decreasing function of $T$.
\end{proposition}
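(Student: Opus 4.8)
Since $\mathbb{A}$ is self-adjoint and accretive, it is a nonnegative self-adjoint operator, so the spectral theorem applies: writing $\mathbb{A}=\int_0^\infty\lambda\,dE_\lambda$, the operator $-\mathbb{A}$ generates the analytic contraction semigroup $S(t)=\int_0^\infty e^{-t\lambda}\,dE_\lambda$. The plan is to define the candidate solution through Duhamel's formula
\begin{equation*}
\boldsymbol{u}(t)=S(t)\boldsymbol{u}_0+\int_0^tS(t-s)\boldsymbol{F}(s)\,ds,
\end{equation*}
to prove the three norm bounds separately for the two terms, and to obtain uniqueness from the energy identity: if $\boldsymbol{v}$ solves the equation with $\boldsymbol{F}=0$, $\boldsymbol{v}(0)=0$, then $\tfrac12\tfrac{d}{dt}\|\boldsymbol{v}\|_H^2=-(\mathbb{A}\boldsymbol{v},\boldsymbol{v})\leq0$, forcing $\boldsymbol{v}\equiv0$.

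For the homogeneous part $S(t)\boldsymbol{u}_0$ I would obtain every bound from scalar estimates under the measure $d\|E_\lambda\boldsymbol{u}_0\|_H^2$; for instance
\begin{equation*}
\int_0^T\|\mathbb{A}S(t)\boldsymbol{u}_0\|_H^2\,dt=\int_0^\infty\lambda^2\,\frac{1-e^{-2T\lambda}}{2\lambda}\,d\|E_\lambda\boldsymbol{u}_0\|_H^2\leq\tfrac12\|\mathbb{A}^{1/2}\boldsymbol{u}_0\|_H^2,
\end{equation*}
together with $\sup_{[0,T]}\|\mathbb{A}^{1/2}S(t)\boldsymbol{u}_0\|_H\leq\|\mathbb{A}^{1/2}\boldsymbol{u}_0\|_H$ and $\|\partial_tS(\cdot)\boldsymbol{u}_0\|_{L^2(0,T;H)}=\|\mathbb{A}S(\cdot)\boldsymbol{u}_0\|_{L^2(0,T;H)}$; strong continuity into $D(\mathbb{A}^{1/2})$ then follows from dominated convergence in the spectral integral.

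The main point is maximal $L^2$-regularity for the convolution term $\boldsymbol{u}_2(t)=\int_0^tS(t-s)\boldsymbol{F}(s)\,ds$. I would extend $\boldsymbol{F}$ by zero to $\R$, pass to the Fourier transform in time so that $(i\tau+\mathbb{A})\widehat{\boldsymbol{u}}_2(\tau)=\widehat{\boldsymbol{F}}(\tau)$, and use the spectral bound $\|\mathbb{A}(i\tau+\mathbb{A})^{-1}\|_{\mathcal{L}(H)}\leq\sup_{\lambda\geq0}\lambda/|i\tau+\lambda|\leq1$; Plancherel's theorem in $\tau$ then gives $\|\mathbb{A}\boldsymbol{u}_2\|_{L^2(0,T;H)}\leq\|\boldsymbol{F}\|_{L^2(0,T;H)}$ and hence, from the equation, $\|\partial_t\boldsymbol{u}_2\|_{L^2(0,T;H)}\leq2\|\boldsymbol{F}\|_{L^2(0,T;H)}$. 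The remaining membership $\boldsymbol{u}\in C([0,T];D(\mathbb{A}^{1/2}))$ with its estimate I would read off from the Lions--Magenes trace theorem, since $L^2(0,T;D(\mathbb{A}))\cap H^1(0,T;H)\hookrightarrow C([0,T];[D(\mathbb{A}),H]_{1/2})$ and $[D(\mathbb{A}),H]_{1/2}=D(\mathbb{A}^{1/2})$ for nonnegative self-adjoint $\mathbb{A}$. Summing the two contributions yields the stated inequality.

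Finally, the monotonicity of $C$ in $T$ is a bookkeeping step: restricting a solution to $(0,T')$ with $T'<T$ only decreases the left-hand norms, and extending a datum $(\boldsymbol{u}_0,\boldsymbol{F})$ from $(0,T')$ by zero to $(0,T)$ together with uniqueness shows the optimal constant $C_0(T')$ on $(0,T')$ is dominated by any admissible constant on $(0,T)$; replacing $C(T)$ by $\sup_{T'\leq T}C_0(T')$ (finite, because the estimates above are locally bounded in $T$) gives a non-decreasing admissible choice. I expect no serious obstacle: the only genuinely non-formal ingredient, maximal $L^2$-regularity, collapses for self-adjoint $\mathbb{A}$ to the trivial scalar inequality $\lambda/|i\tau+\lambda|\leq1$, and what remains is careful identification of the interpolation space and the initial trace.
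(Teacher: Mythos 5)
The paper does not actually prove this proposition: it is stated as a well-known result and attributed to Kato \cite{K1,K2}, so there is no internal proof to compare against. Your argument is a correct, self-contained proof of the standard type (essentially de Simon's maximal $L^2$-regularity theorem specialised to nonnegative self-adjoint operators): spectral calculus for the homogeneous part, Plancherel together with the multiplier bound $\|\mathbb{A}(i\tau+\mathbb{A})^{-1}\|_{\mathcal{L}(H)}\le 1$ for the Duhamel term, the trace/interpolation embedding $L^2(0,T;D(\mathbb{A}))\cap H^1(0,T;H)\hookrightarrow C([0,T];D(\mathbb{A}^{1/2}))$ with $[D(\mathbb{A}),H]_{1/2}=D(\mathbb{A}^{1/2})$, and the energy identity for uniqueness; each individual estimate you write is correct. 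The one step you should not leave implicit is the identity $(i\tau+\mathbb{A})\widehat{\boldsymbol{u}}_2(\tau)=\widehat{\boldsymbol{F}}(\tau)$: after extending $\boldsymbol{F}$ by zero, the convolution $\boldsymbol{u}_2$ is bounded on $\mathbb{R}$ but in general not in $L^2(\mathbb{R};H)$ (already for $\mathbb{A}=0$, or whenever $0$ lies in the spectrum, $\boldsymbol{u}_2$ need not decay), so taking Fourier transforms requires a justification --- for instance replace $\mathbb{A}$ by $\mathbb{A}+\varepsilon$ so that the semigroup decays exponentially, obtain $\|(\mathbb{A}+\varepsilon)\boldsymbol{u}_2^{\varepsilon}\|_{L^2(\mathbb{R};H)}\le\|\boldsymbol{F}\|_{L^2}$ uniformly in $\varepsilon$ and pass to the limit, or first argue for a dense class of smooth compactly supported $\boldsymbol{F}$ with values in $D(\mathbb{A})$ and extend by continuity. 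With that routine repair your route yields exactly the stated result, including the monotone dependence of $C$ on $T$, and it supplies a proof where the paper offers only a citation.
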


Let us define the functional spaces
\begin{align*}
\mathcal{H}&=\{ \boldsymbol{\phi }\in L^{2}(\mathcal{O}):\; \mathrm{div}\,%
\boldsymbol{\phi }=0 \quad \text{in \ }\mathcal{O},\quad \text{such that}%
\quad \boldsymbol{\phi }|_{\mathcal{F}_{0}}=\boldsymbol{\phi }_{\mathcal{F}%
}\in \mathcal{D}^{\prime }(\mathcal{F}_{0}),\quad \boldsymbol{\phi }|_{%
\mathcal{B}_{0}}=\boldsymbol{\phi }_{\mathcal{B}}\in \mathcal{R}\}, \\
\mathcal{V}& =\{\boldsymbol{\phi }\in \mathcal{H}:\quad \boldsymbol{\phi }_{%
\mathcal{F}}\in H^{1}(\mathcal{F}_{0}),\qquad \boldsymbol{\phi }_{\mathcal{F}%
}|_{\partial \mathcal{O}}=0,\qquad (\boldsymbol{\phi }_{\mathcal{F}}-%
\boldsymbol{\phi }_{\mathcal{B}}){\cdot \widetilde{\boldsymbol{n}}}_{\mid
\partial \mathcal{B}_{0}}=0\},
\end{align*}%
where
\begin{equation*}
\mathcal{R}=\{\boldsymbol{\phi }:\quad \boldsymbol{\phi }(\boldsymbol{y})=%
\boldsymbol{\xi }_{\phi }+\mathbf{w}_{\phi }\times \boldsymbol{y}\qquad
\text{with}\quad \boldsymbol{\xi }_{\phi },\mathbf{w}_{\phi }\in \mathbb{R}%
^{d}\}.
\end{equation*}%
\

For $\boldsymbol{u},\boldsymbol{v}\in \mathcal{H}$ we define the inner
product
\begin{equation*}
(\boldsymbol{u},\boldsymbol{v})=\int_{\mathcal{F}_{0}}\boldsymbol{u}_{%
\mathcal{F}}\cdot \boldsymbol{v}_{\mathcal{F}}\,\mathrm{d}\boldsymbol{y}%
+\int_{\mathcal{B}_{0}}\rho _{\mathcal{B}}\boldsymbol{u}_{\mathcal{B}}\cdot
\boldsymbol{v}_{\mathcal{B}}\,\mathrm{d}\boldsymbol{y},
\end{equation*}%
which equals to
\begin{equation}
(\boldsymbol{u},\boldsymbol{v})=\int_{\mathcal{F}_{0}}\boldsymbol{u}_{%
\mathcal{F}}\cdot \boldsymbol{v}_{\mathcal{F}}\,\mathrm{d}\boldsymbol{y}+m%
\boldsymbol{\xi }_{\boldsymbol{u}_{\mathcal{B}}}\cdot \boldsymbol{\xi }_{%
\boldsymbol{v}_{\mathcal{B}}}+(\boldsymbol{I}\mathbf{w}_{\boldsymbol{u}_{%
\mathcal{B}}})\cdot \mathbf{w}_{\mathbf{v}_{\mathcal{B}}}.  \label{inner}
\end{equation}

Let us denote%
\begin{equation*}
{\mathcal{A}}\mathbf{z}(\boldsymbol{y})=\left\{
\begin{array}{l}
-\mu \Delta \mathbf{z}_{\mathcal{F}}(\boldsymbol{y}),\qquad \boldsymbol{y}%
\in \mathcal{F}_{0}, \\
{\frac{2\mu }{m}\int_{\partial \mathcal{B}_{0}}\mathbb{D}(\mathbf{z}_{%
\mathcal{F}})\mathbf{\widetilde{\boldsymbol{n}}}}\,\,\mathrm{d}\bm{\sigma }+%
\Big(2\mu \boldsymbol{I}^{-1}\int_{\partial \mathcal{B}_{0}}\mathbb{D}(%
\mathbf{z}_{\mathcal{F}})\widetilde{\boldsymbol{n}}\times \boldsymbol{y}\,%
\mathrm{d}\bm{\sigma }\Big)\times \boldsymbol{y},\qquad \boldsymbol{y}\in
\mathcal{B}_{0},%
\end{array}%
\right.
\end{equation*}%
and define the operator
\begin{equation}
{A}\mathbf{z}={\mathbb{P}}{\mathcal{A}}\mathbf{z}\qquad \text{for any }\,%
\boldsymbol{z}\in D(A),  \label{operA}
\end{equation}%
where ${\mathbb{P}}:L^{2}(\mathcal{O})\rightarrow \mathcal{H}$\ \ is the
orthogonal projector on $\mathcal{H}$ in $L^{2}(\mathcal{O})$ and the domain
of the operator of $A$ is defined by
\begin{multline}
D(A)=\{\boldsymbol{\phi }\in \mathcal{H}:\quad \boldsymbol{\phi }_{\mathcal{F%
}}\in H^{2}(\mathcal{F}_{0}),\qquad \,\boldsymbol{\phi }_{\mathcal{F}%
}|_{\partial \mathcal{O}}=0,  \notag \\
(\boldsymbol{\phi }_{\mathcal{F}}-\boldsymbol{\phi }_{\mathcal{B}})\cdot
\widetilde{\boldsymbol{n}}\mid _{\partial \mathcal{B}_{0}}=0,\qquad 2\mu (%
\mathbb{D}(\boldsymbol{\phi }_{\mathcal{F}})\cdot \widetilde{\boldsymbol{n}}%
)\times \widetilde{\boldsymbol{n}}\mid _{\partial \mathcal{B}_{0}}=-\beta (%
\boldsymbol{\phi }_{\mathcal{F}}-\boldsymbol{\phi }_{\mathcal{B}})\times
\widetilde{\boldsymbol{n}}\mid _{\partial \mathcal{B}_{0}} \},
\label{operA2}
\end{multline}%
%
%
%

\begin{proposition}
\label{analyticityA} The operator ${\ A}$ defined by (\ref{operA}) is self
adjoint and positive. Consequently $A$ is a generator of contraction
analytic semi-group in $\mathcal{H}$. Moreover, there exists a constant $%
C>0, $ such that for any $\mathbf{z}\in D({A})$ we have
\begin{equation*}
\Vert \mathbf{z}_{\mathcal{F}}\Vert _{H^{2}(\mathcal{F}_{0})}\,+\,\Vert
\mathbf{z}_{\mathcal{B}}\Vert _{H^{2}(\mathcal{B}_{0})} \leq C\Vert(\mathbb{I%
}+ {A})\mathbf{z}\Vert _{L^{2}(\mathcal{O})}.
\end{equation*}
\end{proposition}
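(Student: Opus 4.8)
The plan is to establish the three assertions in turn: (i) self-adjointness and positivity of $A$, (ii) the consequent generation of an analytic contraction semigroup, and (iii) the elliptic regularity estimate.

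\textbf{Self-adjointness and positivity.} First I would compute the bilinear form associated with $A$. For $\boldsymbol{z},\boldsymbol{\phi}\in D(A)$, integrating $-\mu\Delta\boldsymbol{z}_{\mathcal{F}}$ against $\boldsymbol{\phi}_{\mathcal{F}}$ over $\mathcal{F}_0$ and using the identity $\divg\,\mathbb{D}(\boldsymbol{z}_{\mathcal{F}})=\tfrac12\Delta\boldsymbol{z}_{\mathcal{F}}$ for divergence-free fields, one gets
\begin{equation*}
(A\boldsymbol{z},\boldsymbol{\phi})=2\mu\int_{\mathcal{F}_0}\mathbb{D}(\boldsymbol{z}_{\mathcal{F}}):\mathbb{D}(\boldsymbol{\phi}_{\mathcal{F}})\,\mathrm{d}\boldsymbol{y}-2\mu\int_{\partial\mathcal{B}_0}[\mathbb{D}(\boldsymbol{z}_{\mathcal{F}})\widetilde{\boldsymbol{n}}]\cdot\boldsymbol{\phi}_{\mathcal{F}}\,\mathrm{d}\bm{\sigma}+\int_{\partial\mathcal{B}_0}[\mathbb{D}(\boldsymbol{z}_{\mathcal{F}})\widetilde{\boldsymbol{n}}]\cdot\boldsymbol{\phi}_{\mathcal{B}}\,(\text{via the }\mathcal{B}_0\text{ terms}),
\end{equation*}
where the boundary terms from $\partial\mathcal{O}$ vanish since $\boldsymbol{\phi}_{\mathcal{F}}|_{\partial\mathcal{O}}=0$, and the terms over $\mathcal{B}_0$ in the definition of $\mathcal{A}$ are designed precisely so that $\int_{\mathcal{B}_0}\rho_{\mathcal{B}}(\mathcal{A}\boldsymbol{z})_{\mathcal{B}}\cdot\boldsymbol{\phi}_{\mathcal{B}}\,\mathrm{d}\boldsymbol{y}=\int_{\partial\mathcal{B}_0}[\mathbb{D}(\boldsymbol{z}_{\mathcal{F}})\widetilde{\boldsymbol{n}}]\cdot\boldsymbol{\phi}_{\mathcal{B}}\,\mathrm{d}\bm{\sigma}$ after using that $\boldsymbol{\phi}_{\mathcal{B}}\in\mathcal{R}$ and the definitions of $m$ and $\boldsymbol{I}$ together with the cyclic property of the scalar triple product. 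Note also that $\mathbb{P}$ disappears from the pairing because $\boldsymbol{\phi}\in\mathcal{H}$. Combining the boundary integrals over $\partial\mathcal{B}_0$ and decomposing the velocity jump into normal and tangential parts, using $(\boldsymbol{z}_{\mathcal{F}}-\boldsymbol{z}_{\mathcal{B}})\cdot\widetilde{\boldsymbol{n}}=0=(\boldsymbol{\phi}_{\mathcal{F}}-\boldsymbol{\phi}_{\mathcal{B}})\cdot\widetilde{\boldsymbol{n}}$ on $\partial\mathcal{B}_0$ and the Navier boundary condition from $D(A)$, the boundary terms collapse to $\beta\int_{\partial\mathcal{B}_0}(\boldsymbol{z}_{\mathcal{F}}-\boldsymbol{z}_{\mathcal{B}})\cdot(\boldsymbol{\phi}_{\mathcal{F}}-\boldsymbol{\phi}_{\mathcal{B}})\,\mathrm{d}\bm{\sigma}$. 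This yields the symmetric, nonnegative form
\begin{equation*}
a(\boldsymbol{z},\boldsymbol{\phi})=2\mu\int_{\mathcal{F}_0}\mathbb{D}(\boldsymbol{z}_{\mathcal{F}}):\mathbb{D}(\boldsymbol{\phi}_{\mathcal{F}})\,\mathrm{d}\boldsymbol{y}+\beta\int_{\partial\mathcal{B}_0}(\boldsymbol{z}_{\mathcal{F}}-\boldsymbol{z}_{\mathcal{B}})\cdot(\boldsymbol{\phi}_{\mathcal{F}}-\boldsymbol{\phi}_{\mathcal{B}})\,\mathrm{d}\bm{\sigma},
\end{equation*}
so $A$ is symmetric and $(A\boldsymbol{z},\boldsymbol{z})\geq 0$. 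Positivity (injectivity with $(A\boldsymbol{z},\boldsymbol{z})=0$ forcing $\boldsymbol{z}=0$) follows from a Korn-type inequality on $\mathcal{F}_0$: if $\mathbb{D}(\boldsymbol{z}_{\mathcal{F}})=0$ then $\boldsymbol{z}_{\mathcal{F}}$ is a rigid motion, which together with $\boldsymbol{z}_{\mathcal{F}}|_{\partial\mathcal{O}}=0$ gives $\boldsymbol{z}_{\mathcal{F}}=0$, and then the jump term forces $\boldsymbol{z}_{\mathcal{B}}=0$ as well. To upgrade symmetry to self-adjointness I would realize $A$ as the operator associated via the first representation theorem with the closed, densely defined, nonnegative form $a$ on $\mathcal{V}$ (coercivity of $a$ on $\mathcal{V}$ modulo the $L^2$ inner product again by Korn's inequality), and then check that the form-domain operator coincides with $A$ on $D(A)$ as defined by \eqref{operA2} — this is where elliptic regularity for the Stokes system with mixed Dirichlet/Navier conditions enters to show the form-operator's domain actually lands in $H^2(\mathcal{F}_0)$.

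\textbf{Analytic semigroup.} Once $A$ is self-adjoint and nonnegative (indeed positive) on the Hilbert space $\mathcal{H}$, it is a classical fact that $-A$ generates an analytic semigroup of contractions: the numerical range lies in $[0,\infty)$, hence the resolvent set contains $\mathbb{C}\setminus[0,\infty)$ with the bound $\|(\lambda+A)^{-1}\|\leq 1/\mathrm{dist}(\lambda,[0,\infty))$, which is exactly the sectorial resolvent estimate. I would simply cite this.

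\textbf{The $H^2$ estimate.} For the last inequality, the term $\boldsymbol{y}\mapsto\mathbb{I}+A$ applied to $\boldsymbol{z}$ gives, on $\mathcal{F}_0$, the Stokes-type problem $-\mu\Delta\boldsymbol{z}_{\mathcal{F}}+\nabla q=\boldsymbol{g}_{\mathcal{F}}:=((\mathbb{I}+A)\boldsymbol{z})_{\mathcal{F}}-\boldsymbol{z}_{\mathcal{F}}\in L^2(\mathcal{F}_0)$ (the pressure $q$ appearing as the potential part removed by $\mathbb{P}$), $\divg\,\boldsymbol{z}_{\mathcal{F}}=0$, with $\boldsymbol{z}_{\mathcal{F}}|_{\partial\mathcal{O}}=0$ and the Navier conditions on $\partial\mathcal{B}_0$ together with $(\boldsymbol{z}_{\mathcal{F}}-\boldsymbol{z}_{\mathcal{B}})\cdot\widetilde{\boldsymbol{n}}=0$. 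Standard elliptic regularity for the stationary Stokes system with these mixed boundary conditions (the boundaries being smooth and disjoint, $\mathrm{dist}(\mathcal{B}_0,\partial\mathcal{O})>0$) gives $\|\boldsymbol{z}_{\mathcal{F}}\|_{H^2(\mathcal{F}_0)}+\|q\|_{H^1(\mathcal{F}_0)/\mathbb{R}}\leq C(\|\boldsymbol{g}_{\mathcal{F}}\|_{L^2(\mathcal{F}_0)}+\|\boldsymbol{z}_{\mathcal{B}}\|_{H^{3/2}(\partial\mathcal{B}_0)})$; since $\boldsymbol{z}_{\mathcal{B}}\in\mathcal{R}$ is finite-dimensional, its $H^{3/2}(\partial\mathcal{B}_0)$ norm is controlled by $|\boldsymbol{\xi}|+|\mathbf{w}|$, hence by $\|\boldsymbol{z}_{\mathcal{B}}\|_{L^2(\mathcal{B}_0)}\leq\|(\mathbb{I}+A)\boldsymbol{z}\|_{L^2(\mathcal{O})}$ (using $\|\boldsymbol{z}\|_{\mathcal{H}}\leq\|(\mathbb{I}+A)\boldsymbol{z}\|_{\mathcal{H}}$ from positivity of $A$). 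Finally $\|\boldsymbol{z}_{\mathcal{B}}\|_{H^2(\mathcal{B}_0)}$ is likewise bounded by $|\boldsymbol{\xi}|+|\mathbf{w}|$ since elements of $\mathcal{R}$ are affine. Chaining these bounds yields the claimed estimate.

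\textbf{Main obstacle.} The crux is not the abstract semigroup step but verifying that the operator $A$ defined concretely by \eqref{operA}--\eqref{operA2} is genuinely self-adjoint — equivalently, that $D(A)$ as written is exactly the domain of the form operator, which requires the stationary Stokes regularity theory for the mixed Dirichlet--Navier boundary value problem and a careful reading of how the rigid-body part of $\mathcal{A}$ encodes the surface force balance so that all boundary integrals pair up correctly. The triple-product/inertia bookkeeping on $\mathcal{B}_0$ and the correct handling of the Korn inequality on the perforated domain $\mathcal{F}_0$ with mixed boundary conditions are the places where care is needed.
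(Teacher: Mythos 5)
Your proposal is correct and follows essentially the same route as the paper: integration by parts to identify the symmetric nonnegative bilinear form $a(\mathbf{z},\mathbf{v})=2\mu\int_{\mathcal{F}_0}\mathbb{D}(\mathbf{z}_{\mathcal{F}}):\mathbb{D}(\mathbf{v}_{\mathcal{F}})\,\mathrm{d}\boldsymbol{y}+\beta\int_{\partial\mathcal{B}_0}(\mathbf{z}_{\mathcal{F}}-\mathbf{z}_{\mathcal{B}})\cdot(\mathbf{v}_{\mathcal{F}}-\mathbf{v}_{\mathcal{B}})\,\mathrm{d}\bm{\sigma}$, self-adjointness via Lax--Milgram/form-representation on $\mathcal{V}$ combined with elliptic regularity for the stationary Stokes problem with mixed Dirichlet--Navier conditions (the paper cites Shibata--Shimada here), and the $H^2$ bound from that same regularity plus the finite-dimensionality of the rigid-motion space $\mathcal{R}$. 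Your packaging of self-adjointness through the form operator rather than the surjectivity of $\mathbb{I}+A$ is only a cosmetic difference, and you correctly identify the Stokes regularity step as the real content.
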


\begin{proof}
\textbf{(i) $A$ is symmetric.} Let $\mathbf{z},\mathbf{v}\in D(A)$. Then the
integration by parts used twicely gives that
\begin{eqnarray*}
(A\mathbf{z},\mathbf{v}) &=&2\mu \int_{\mathcal{F}_{0}}\mathbb{D}(\mathbf{z}%
_{\mathcal{F}}):\mathbb{D}(\mathbf{v}_{\mathcal{F}})\,\mathrm{d}\boldsymbol{y%
}+\beta \int_{\partial \mathcal{B}_{0}}[(\mathbf{z}_{\mathcal{F}}-\mathbf{z}%
_{\mathcal{B}})\times \widetilde{\boldsymbol{n}}]\cdot \lbrack (\mathbf{v}_{%
\mathcal{F}}-\mathbf{v}_{\mathcal{B}})\times \widetilde{\boldsymbol{n}}]\,%
\mathrm{d}\bm{\sigma } \\
&=&(\mathbf{z},A\mathbf{v}).
\end{eqnarray*}
Hence $A$ is a symmetric operator.

\noindent \textbf{(ii) $A$ is positive.} From \textbf{(i)} we have that%
\begin{equation*}
(A\mathbf{z},\mathbf{z})\,=2\mu \Vert \mathbb{D}(\mathbf{z}_{\mathcal{F}%
})\Vert _{L^{2}(\mathcal{F}_{0})}^{2}+\beta \int_{\partial \mathcal{B}_{0}}|%
\mathbf{z}_{\mathcal{F}}-\mathbf{z}_{\mathcal{B}}|^{2}\,\mathrm{d}\bm{\sigma
}\qquad \text{for any }\,\boldsymbol{z}\in D(A).
\end{equation*}%
Thus $A$ is a positive operator.

\noindent \textbf{(iii) $A$ is self-adjoint.} In order to prove that $A$ is
self adjoint, it suffices to prove that the operator $\mathbb{I}%
+A:D(A)\rightarrow \mathcal{H}$ is surjective.

First, let us note that the solution $\mathbf{z}\in D(A)$ of the problem $(%
\mathbb{I}+A)\mathbf{z}=\boldsymbol{F}\in \mathcal{H}$ \ in the weak
formulation\ satisfies the integral equality%
\begin{equation*}
(\mathbf{z},\mathbf{v})+(A\mathbf{z},\mathbf{v})=(\boldsymbol{F},\mathbf{v}%
)\qquad \text{for any}\quad \mathbf{v}\in \mathcal{V},
\end{equation*}%
that is
\begin{align*}
(\mathbf{z},\mathbf{v})\,& +\,2\mu \int_{\mathcal{F}_{0}}\mathbb{D}(\mathbf{z%
}_{\mathcal{F}}):\mathbb{D}(\mathbf{v}_{\mathcal{F}})\,\mathrm{d}\boldsymbol{%
y}\,\, \\
& +\beta \int_{\partial \mathcal{B}_{0}}(\mathbf{z}_{\mathcal{F}}-\mathbf{z}%
_{\mathcal{B}})\cdot (\mathbf{v}_{\mathcal{F}}-\mathbf{v}_{\mathcal{B}})\,%
\mathrm{d}\bm{\sigma }=(\boldsymbol{F},\mathbf{v})\qquad \text{for any}\quad
\mathbf{v}\in \mathcal{V}
\end{align*}

Let us define the bilinear form $a:\mathcal{V}\times \mathcal{V}\rightarrow
\mathbb{R}$ \ \ by
\begin{align}
a(\mathbf{z},\mathbf{v})=(\mathbf{z},\mathbf{v})\,& +\,2\mu \int_{\mathcal{F}%
_{0}}\mathbb{D}(\mathbf{z}_{\mathcal{F}}):\mathbb{D}(\mathbf{v}_{\mathcal{F}%
})\,\mathrm{d}\boldsymbol{y}\,  \notag \\
& +\beta \int_{\partial \mathcal{B}_{0}}(\mathbf{z}_{\mathcal{F}}-\mathbf{z}%
_{\mathcal{B}})\cdot (\mathbf{v}_{\mathcal{F}}-\mathbf{v}_{\mathcal{B}})\,%
\mathrm{d}\bm{\sigma }\qquad \text{for any}\quad \mathbf{z},\mathbf{v}\in
\mathcal{V}.  \label{fluid-solid-bilinear}
\end{align}%
Using the positivity of the operator $A,$ we easily check that $a$ is a
bilinear continuous coercive form on $\mathcal{V}$. \ Furthermore the
mapping $\mathbf{v}\rightarrow (\boldsymbol{F},\mathbf{v})$ is a continuous
linear form on $\mathcal{V}$. \ Therefore the Lax-Milgram theorem implies
the existence of a unique solution $\mathbf{z}\in \mathcal{V}$ of the
problem \eqref{fluid-solid-bilinear}. Using \cite{ShibataShimada} we deduce
that there exists $q_{\mathcal{F}}\in \mathcal{D}^{\prime }(\mathcal{F}%
_{0}), $ such that
\begin{equation*}
\mathbf{z}_{\mathcal{F}}\,-\,\mu \Delta \mathbf{z}_{\mathcal{F}}\,+\,\nabla
\,q_{\mathcal{F}}\,=\,\boldsymbol{F}\qquad \mathrm{in}\quad \mathcal{D}%
^{\prime }(\mathcal{F}_{0}).
\end{equation*}%
In addition, $\mathbf{z}_{\mathcal{F}}$ is a unique weak solution of the
system
\begin{equation*}
\left\{
\begin{array}{ccc}
\mathbf{z}_{\mathcal{F}}\,-\mu \,\Delta \mathbf{z}_{\mathcal{F}}+\nabla q_{%
\mathcal{F}}=\boldsymbol{F},\qquad \mathrm{div}\,\mathbf{z}_{\mathcal{F}}=0
& \mathrm{in} & \mathcal{F}_{0}, \\
(\mathbf{z}_{\mathcal{F}}-\mathbf{z}_{\mathcal{B}})\cdot \widetilde{%
\boldsymbol{n}}\,=\,0,\qquad 2\mu \lbrack \mathbb{D}(\mathbf{z}_{\mathcal{F}%
})\widetilde{\boldsymbol{n}}]\times \widetilde{\boldsymbol{n}}=-\beta (%
\mathbf{z}_{\mathcal{F}}-\mathbf{z}_{\mathcal{B}})\times \widetilde{%
\boldsymbol{n}} & \mathrm{on} & \partial \mathcal{B}_{0}, \\
\mathbf{z}_{\mathcal{F}}\,=\,\mathbf{0}\quad \mathrm{on}\quad \partial
\mathcal{O} &  &
\end{array}%
\right.
\end{equation*}%
and it satisfies \ the estimate%
\begin{equation*}
\Vert \mathbf{z}_{\mathcal{F}}\Vert _{H^{2}(\mathcal{F}_{0})}\,\leq
\,C\,(\Vert \boldsymbol{F}\Vert _{L^{2}(\mathcal{F}_{0})}\,+\,\Vert \mathbf{z%
}_{\mathcal{B}}\Vert _{H^{3/2}(\partial \mathcal{B}_{0})}).
\end{equation*}%
On the other hand, since $\mathbf{z}_{\mathcal{B}}\in \mathcal{R},$ there
exist two vectors $\boldsymbol{\xi },\,\mathbf{w}\in \mathbb{R}^{d},$ such
that $\mathbf{z}_{\mathcal{B}}=\boldsymbol{\xi }\,+\,\mathbf{w}\times
\boldsymbol{y}$\textrm{\ in }$\mathcal{B}_{0},$ that gives
\begin{equation*}
\Vert \mathbf{z}_{\mathcal{B}}\Vert _{H^{2}(\mathcal{B}_{0})}\,\leq
\,C\,\Vert \boldsymbol{F}\Vert _{L^{2}(\mathcal{O})}.
\end{equation*}%
Hence we conclude that
\begin{equation*}
\Vert \mathbf{z}_{\mathcal{F}}\Vert _{H^{2}(\mathcal{F}_{0})}\,+\,\Vert
\mathbf{z}_{\mathcal{B}}\Vert _{H^{2}(\mathcal{B}_{0})}\,\leq \,C\,\Vert (%
\mathbb{I}+A)\mathbf{z}\Vert _{L^{2}(\mathcal{O})}.
\end{equation*}%
$\square $
\end{proof}

Now we are in a position to prove the following result for the linearised
fluid-structure problem (\ref{lin}).

\begin{proposition}
Let $T>0$. If
\begin{equation*}
\widetilde{\boldsymbol{u}}_{0}=(\widetilde{\boldsymbol{u}}_{\mathcal{F},0},%
\widetilde{\boldsymbol{u}}_{\mathcal{B},0})\in \mathcal{V},\qquad
\boldsymbol{F}_{0}\in L^{2}(0,T;L^{2}(\mathcal{F}_{0}))\quad \text{and}\quad
\boldsymbol{F}_{1},\boldsymbol{F}_{2}\in L^{2}(0,T),
\end{equation*}%
then problem (\ref{lin}) has a unique solution on $[0,T]$, that satisfies a
priori estimate
\begin{align}
\Vert \mathbf{z}_{\mathcal{F}}\Vert _{\mathcal{U}_{T}(\mathcal{F}_{0})}&
+\Vert \nabla q_{\mathcal{F}}\Vert _{L^{2}(0,T;L^{2}(\mathcal{O}))}+\Vert
\boldsymbol{\xi }\Vert _{H^{1}(0,T)}+\Vert \mathbf{w}\Vert _{H^{1}(0,T)}
\notag \\
& \leq C(\Vert (\boldsymbol{F}_{1},\boldsymbol{F}_{2})\Vert
_{L^{2}(0,T)}+\Vert \boldsymbol{F}_{0}\Vert _{L^{2}(0,T;L^{2}(\mathcal{F}%
_{0}))}+\Vert \widetilde{\boldsymbol{u}}_{\mathcal{B},0}\Vert _{H^{1}(%
\mathcal{B}_{0})}+\Vert \widetilde{\boldsymbol{u}}_{0}\Vert _{H^{1}(\mathcal{%
F}_{0})}),  \label{est}
\end{align}%
with $C$ is a nondecreasing function of $T$.
\end{proposition}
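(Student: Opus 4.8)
\emph{Plan.} The idea is to recast \eqref{lin} as the abstract parabolic Cauchy problem
\[
\mathbf{z}'+A\mathbf{z}=\mathbf{F},\qquad \mathbf{z}(0)=\widetilde{\boldsymbol{u}}_{0},
\]
in the Hilbert space $\mathcal{H}$, with $A$ the operator of \eqref{operA}--\eqref{operA2}, and then to apply Proposition \ref{abstractresult} together with the elliptic regularity estimate of Proposition \ref{analyticityA}. Here $\mathbf{F}\in\mathcal{H}$ is the datum whose fluid component is $\boldsymbol{F}_{0}$ and whose rigid component $\boldsymbol{\xi}_{\mathbf{F}}+\mathbf{w}_{\mathbf{F}}\times\boldsymbol{y}$ is fixed by $m\boldsymbol{\xi}_{\mathbf{F}}=\boldsymbol{F}_{1}$, $\boldsymbol{I}\mathbf{w}_{\mathbf{F}}=\boldsymbol{F}_{2}$ (equivalently $\mathbf{F}=\mathbb{P}$ of the obvious $L^{2}(\mathcal{O})$ field), so that $\|\mathbf{F}\|_{L^{2}(0,T;\mathcal{H})}\le C\big(\|\boldsymbol{F}_{0}\|_{L^{2}(0,T;L^{2}(\mathcal{F}_{0}))}+\|(\boldsymbol{F}_{1},\boldsymbol{F}_{2})\|_{L^{2}(0,T)}\big)$.

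\textbf{Step 1 (equivalence with the abstract equation).} Testing the fluid momentum equation with $\mathbf{v}\in\mathcal{V}$, integrating by parts, and using $\mathrm{div}\,\mathbf{v}=0$, $\mathbf{v}_{\mathcal{F}}|_{\partial\mathcal{O}}=0$ and $(\mathbf{v}_{\mathcal{F}}-\mathbf{v}_{\mathcal{B}})\cdot\widetilde{\boldsymbol{n}}|_{\partial\mathcal{B}_{0}}=0$, the pressure term and the \emph{normal} component of the stress on $\partial\mathcal{B}_{0}$ collapse; the \emph{tangential} component is rewritten via the Navier law as $-\beta(\mathbf{z}_{\mathcal{F}}-\mathbf{z}_{\mathcal{B}})$, producing the boundary term $\beta\int_{\partial\mathcal{B}_{0}}(\mathbf{z}_{\mathcal{F}}-\mathbf{z}_{\mathcal{B}})\cdot(\mathbf{v}_{\mathcal{F}}-\mathbf{v}_{\mathcal{B}})$; and the remaining piece $\int_{\partial\mathcal{B}_{0}}\mathbb{T}(\mathbf{z}_{\mathcal{F}},q_{\mathcal{F}})\widetilde{\boldsymbol{n}}\cdot\mathbf{v}_{\mathcal{B}}$ is eliminated using the two rigid-body ODEs tested against $\boldsymbol{\xi}_{\mathbf{v}},\mathbf{w}_{\mathbf{v}}$. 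Collecting terms and recalling the expression for $(A\mathbf{z},\mathbf{v})$ already computed in the proof of Proposition \ref{analyticityA}, one obtains $(\mathbf{z}',\mathbf{v})+(A\mathbf{z},\mathbf{v})=(\mathbf{F},\mathbf{v})$ for all $\mathbf{v}\in\mathcal{V}$, i.e. the weak form of the abstract equation; conversely, given a solution of the abstract equation, De Rham's theorem (as in the proof of Proposition \ref{analyticityA}, cf. \cite{ShibataShimada}) recovers $q_{\mathcal{F}}$ and shows all equations of \eqref{lin} hold. This is the only genuinely non-routine step, because it is precisely where the slip law and the Newton--Euler balances must be shown to be \emph{exactly} encoded by the projected operator $A$.

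\textbf{Step 2 (form domain and application of Proposition \ref{abstractresult}).} Since $A$ is self-adjoint and positive (Proposition \ref{analyticityA}), $D(A^{1/2})$ is the form domain of the closed quadratic form $\mathbf{v}\mapsto(A\mathbf{v},\mathbf{v})=2\mu\|\mathbb{D}(\mathbf{v}_{\mathcal{F}})\|_{L^{2}(\mathcal{F}_{0})}^{2}+\beta\int_{\partial\mathcal{B}_{0}}|\mathbf{v}_{\mathcal{F}}-\mathbf{v}_{\mathcal{B}}|^{2}$; by Korn's inequality on $\mathcal{F}_{0}$ (available thanks to the homogeneous Dirichlet trace on $\partial\mathcal{O}$) and the trace term this form domain is exactly $\mathcal{V}$, with $\|\mathbf{v}\|_{D(A^{1/2})}\sim\|\mathbf{v}_{\mathcal{F}}\|_{H^{1}(\mathcal{F}_{0})}+|\boldsymbol{\xi}_{\mathbf{v}}|+|\mathbf{w}_{\mathbf{v}}|$. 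Hence $\widetilde{\boldsymbol{u}}_{0}\in\mathcal{V}=D(A^{1/2})$, and Proposition \ref{abstractresult} (with $H=\mathcal{H}$) yields a unique $\mathbf{z}\in L^{2}(0,T;D(A))\cap C([0,T];\mathcal{V})\cap H^{1}(0,T;\mathcal{H})$ with the associated estimate, the constant being non-decreasing in $T$.

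\textbf{Step 3 (regularity transfer and pressure).} Writing $\mathbf{z}=(\mathbf{z}_{\mathcal{F}},\mathbf{z}_{\mathcal{B}})$ with $\mathbf{z}_{\mathcal{B}}=\boldsymbol{\xi}+\mathbf{w}\times\boldsymbol{y}$, the inequality $\|\mathbf{z}_{\mathcal{F}}\|_{H^{2}(\mathcal{F}_{0})}+\|\mathbf{z}_{\mathcal{B}}\|_{H^{2}(\mathcal{B}_{0})}\le C\|(\mathbb{I}+A)\mathbf{z}\|_{L^{2}(\mathcal{O})}$ of Proposition \ref{analyticityA} upgrades the $L^{2}(0,T;D(A))$ bound to $\mathbf{z}_{\mathcal{F}}\in L^{2}(0,T;H^{2}(\mathcal{F}_{0}))$, while $C([0,T];\mathcal{V})$ and $H^{1}(0,T;\mathcal{H})$ give $\mathbf{z}_{\mathcal{F}}\in C([0,T];H^{1}(\mathcal{F}_{0}))\cap H^{1}(0,T;L^{2}(\mathcal{F}_{0}))$ and $\boldsymbol{\xi},\mathbf{w}\in H^{1}(0,T)$; thus $\mathbf{z}_{\mathcal{F}}\in\mathcal{U}_{T}(\mathcal{F}_{0})$. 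The pressure is then recovered a posteriori: for a.e. $t$, $\boldsymbol{F}_{0}(t)-\partial_{t}\mathbf{z}_{\mathcal{F}}(t)+\mu\Delta\mathbf{z}_{\mathcal{F}}(t)\in L^{2}(\mathcal{F}_{0})$ and annihilates all admissible divergence-free test fields, so it equals $\nabla q_{\mathcal{F}}(t)$ for a unique $q_{\mathcal{F}}(t)$ modulo constants, with $\|\nabla q_{\mathcal{F}}\|_{L^{2}(0,T;L^{2}(\mathcal{F}_{0}))}$ dominated by the norms already controlled. Summing the estimates yields \eqref{est}, and uniqueness follows from the uniqueness in Proposition \ref{abstractresult} together with the uniqueness of $\nabla q_{\mathcal{F}}$.
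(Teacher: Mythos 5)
Your proposal is correct and follows essentially the same route as the paper: recast \eqref{lin} as $\mathbf{z}'+A\mathbf{z}=\mathbf{F}$ in $\mathcal{H}$, apply Proposition \ref{abstractresult} using the self-adjointness and positivity of $A$ from Proposition \ref{analyticityA}, transfer regularity via the elliptic estimate, and recover $q_{\mathcal{F}}$ and the rigid-body balances from the weak formulation. The only (cosmetic) difference is that you argue the equivalence by testing the original system against $\mathcal{V}$, while the paper starts from the abstract solution and tests against $\mathcal{H}$ to extract the pressure and the Newton--Euler equations; you also supply slightly more detail on the identification $D(A^{1/2})=\mathcal{V}$ via Korn's inequality, which the paper merely asserts.
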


\begin{proof}
We follow Wang verbatim \cite{Wa}. The difference between Wang´s problem and
our problem is that, Wang considered slip boundary conditions on both
boundaries and we consider the Mixed case. Moreover, in \cite{Wa} only 2D
case is investigated. We consider 3D case. For completeness, we will give
the principal part of the proof.

We will show that the linearized fluid-solid problem \eqref{lin} can be
written in the form
\begin{equation}
\partial _{t}\mathbf{z}+A\mathbf{z}=\boldsymbol{F},\qquad \mathbf{z}(0)=%
\widetilde{\boldsymbol{u}}_{0},  \label{newform}
\end{equation}%
where%
\begin{equation*}
\mathbf{z}=\mathbf{z}_{\mathcal{F}}1_{\mathcal{F}_{0}}+\mathbf{z}_{\mathcal{B%
}}1_{\mathcal{B}_{0}},\qquad \widetilde{\boldsymbol{u}}_{0}=\mathbf{z}_{%
\mathcal{F}}(0)1_{\mathcal{F}_{0}}+\mathbf{z}_{\mathcal{B}}(0)1_{\mathcal{B}%
_{0}}
\end{equation*}%
and
\begin{equation*}
\boldsymbol{F}=\mathbb{P}\left( \mathbf{F}_{0}1_{\mathcal{F}_{0}}+\Big(\frac{%
\boldsymbol{F}_{1}}{m}+\boldsymbol{I}^{-1}\boldsymbol{F}_{2}\times
\boldsymbol{y}\Big)1_{\mathcal{B}_{0}}\right) .
\end{equation*}%
By Proposition \ref{analyticityA}, the fluid-solid operator $%
A:D(A)\rightarrow \mathcal{H}$ \ is a positive self adjoint operator. Thus
by Proposition \ref{abstractresult}, the problem \eqref{lin} has a unique
solution
\begin{equation*}
\mathbf{z}\in L^{2}(0,T;\,D(A))\cap C([0,T];\,D(A^{1/2}))\cap H^{1}(0,T;%
\mathcal{H}).
\end{equation*}%
Recall that the norm of $D(A^{1/2})$ is equivalent to the norm of $\mathcal{V%
}$.

Since $\mathbf{z}\in H^{1}(0,T;\mathcal{H}),$ there exist two vector
functions $\boldsymbol{\xi },\mathbf{w}\in H^{1}(0,T),$ such that
\begin{equation*}
\mathbf{z}_{\mathcal{B}}(t,\boldsymbol{y})=\boldsymbol{\xi }(t)+\mathbf{w}%
(t)\times \boldsymbol{y}\qquad \text{for any }\quad \boldsymbol{y}\in
\mathcal{B}_{0}.
\end{equation*}%
If we take the inner product \eqref{inner} of equality \eqref{newform}$_{1}$
and $\boldsymbol{\phi }\in \mathcal{H}$, \ we get
\begin{align}
\int_{\mathcal{F}_{0}}&\mathbf{z}_{\mathcal{F}}^{\prime }\cdot \boldsymbol{%
\phi }_{\mathcal{F}}\,\,\mathrm{d}\boldsymbol{y}\,+\,m\Big(\boldsymbol{\xi }%
^{\prime }\,-\,\frac{\boldsymbol{F}_{1}}{m}\Big)\cdot \boldsymbol{\xi }_{%
\boldsymbol{\phi }}\,+\,\boldsymbol{I}\Big(\mathbf{w}^{\prime }\,-\,%
\boldsymbol{I}^{-1}\boldsymbol{F}_{2}\Big)\cdot \mathbf{w}_{\phi }\,-\,\int_{%
\mathcal{F}_{0}}\mu \Delta \mathbf{z}_{\mathcal{F}}\cdot \boldsymbol{\phi }_{%
\mathcal{F}}\,\,\mathrm{d}\boldsymbol{y}  \notag \\
&+\,2\,\mu \,\int_{\partial \mathcal{B}_{0}}\mathbb{D}(\mathbf{z}_{\mathcal{F%
}})\widetilde{\boldsymbol{n}}\,\mathrm{d}\bm{\sigma }\cdot \boldsymbol{\xi }%
_{\phi }\,+\,2\,\mu \Big(\int_{\partial \mathcal{B}_{0}}\mathbb{D}(\mathbf{z}%
_{\mathcal{F}})\widetilde{\boldsymbol{n}}\times \boldsymbol{y}\,\mathrm{d}%
\bm{\sigma }\Big)\cdot \mathbf{w}_{\phi } =\int_{\mathcal{F}_{0}}\mathbb{P}%
\boldsymbol{F}_{0}\cdot \boldsymbol{\phi }_{\mathcal{F}}\,\,\mathrm{d}%
\boldsymbol{y}.  \label{C}
\end{align}%
Considering test functions $\boldsymbol{\phi }\in \mathcal{H},$ such that $%
\boldsymbol{\phi }_{\mathcal{B}}=\boldsymbol{0},$\ \ we obtain that there
exists a function $q_{\mathcal{F}}\in L^{2}(0,T;\,H^{1}(\mathcal{F}_{0}))$
satisfying the equation
\begin{equation*}
\mathbf{z}_{\mathcal{F}}^{\prime }\,-\,\mu \Delta \mathbf{z}_{\mathcal{F}%
}\,+\,\nabla q_{\mathcal{F}}=\boldsymbol{F}_{0}\qquad \mathrm{in}\quad
\mathcal{F}_{0}.
\end{equation*}%
Thus for arbitrary $\boldsymbol{\phi }\in \mathcal{H}$, we have
\begin{equation*}
\int_{\mathcal{F}_{0}}\big(\mathbf{z}_{\mathcal{F}}^{\prime }\,-\,\mu \Delta
\mathbf{z}_{\mathcal{F}}\,-\,\boldsymbol{F}_{0}\big)\cdot \boldsymbol{\phi }%
_{\mathcal{F}}\,\,\mathrm{d}\boldsymbol{y}=-\,\int_{\partial \mathcal{B}%
_{0}}q_{\mathcal{F}}\boldsymbol{\phi }_{\mathcal{F}}\cdot \widetilde{%
\boldsymbol{n}}\,\mathrm{d}\bm{\sigma }.
\end{equation*}%
Substituting this equality into \eqref{C}, we obtain that
\begin{align*}
m\Big(\boldsymbol{\xi }^{\prime }(t)\,-\,\frac{\boldsymbol{F}_{1}}{m}\Big)%
\cdot \boldsymbol{\xi }_{\phi }\,& +\,\boldsymbol{I}\Big(\mathbf{w}_{%
\mathcal{B}}^{\prime }\,-\,\boldsymbol{I}^{-1}\boldsymbol{F}_{2}\Big)\cdot
\mathbf{w}_{\boldsymbol{\phi }}\,+\,2\,\mu \,\int_{\partial \mathcal{B}_{0}}%
\mathbb{D}(\mathbf{z}_{\mathcal{F}})\widetilde{\boldsymbol{n}}\,\mathrm{d}%
\bm{\sigma }\cdot \boldsymbol{\xi }_{\phi } \\
& +\,2\,\mu \Big(\int_{\partial \mathcal{B}_{0}}\mathbb{D}(\mathbf{z}_{%
\mathcal{F}})\widetilde{\boldsymbol{n}}\times \boldsymbol{y}\,\mathrm{d}%
\bm{\sigma }\Big)\cdot \mathbf{w}_{\phi }\,=\,\int_{\partial \mathcal{B}%
_{0}}q_{\mathcal{F}}\boldsymbol{\phi }_{\mathcal{F}}\cdot \widetilde{%
\boldsymbol{n}}\,\mathrm{d}\bm{\sigma }.
\end{align*}%
Since the function $\boldsymbol{\phi }$ is divergence free, we have $(%
\boldsymbol{\phi }_{\mathcal{F}}-\boldsymbol{\phi }_{\mathcal{B}})\cdot
\widetilde{\boldsymbol{n}}\mid _{\partial \mathcal{B}_{0}}=0.$ As a
consequence we obtain that
\begin{eqnarray*}
m\boldsymbol{\xi }^{\prime }(t)+\,\int_{\partial \mathcal{B}_{0}}\big(2\mu
\mathbb{D}(\mathbf{z}_{\mathcal{F}})-q_{\mathcal{F}}\mathbb{I}\big)%
\widetilde{\boldsymbol{n}}\,\mathrm{d}\bm{\sigma }\, &=&\,\boldsymbol{F}_{1},
\\
\boldsymbol{I}\mathbf{w}^{\prime }(t)+\int_{\partial \mathcal{B}_{0}}\big(%
2\mu \mathbb{D}(\mathbf{z}_{\mathcal{F}})-q_{\mathcal{F}}\mathbb{I}\big)%
\widetilde{\boldsymbol{n}}\times \boldsymbol{y}\,\mathrm{d}\bm{\sigma }\,
&=&\,\boldsymbol{F}_{2}.
\end{eqnarray*}%
Therefore a problem \eqref{newform} is equivalent to a problem \eqref{lin}.
Finally Propositions \ref{abstractresult} and \ref{analyticityA} imply the
uniqueness of the solution $(\mathbf{z}_{\mathcal{F}},q_{\mathcal{F}},%
\boldsymbol{\xi },\mathbf{w}),$ that satisfies estimate \eqref{est}. $%
\square $
\end{proof}

\subsection{Nonlinear case (Proof of Theorem \protect\ref{th:2.1})}

\label{nonlinearsec} In this section we show Theorem \ref{th:2.1}. To do it
we prove existence and uniqueness results for the modified system %
\eqref{modifiedsystem}. \ The proof is based on the fixed point argument.
Let us define
\begin{equation*}
\mathcal{P}:(\widehat{\mathbf{z}}_{\mathcal{F}},\widehat{q}_{\mathcal{F}},%
\widehat{\boldsymbol{\xi }},\widehat{\mathbf{w}})\rightarrow (\mathbf{z}_{%
\mathcal{F}},q_{\mathcal{F}},\boldsymbol{\xi },\mathbf{w}),
\end{equation*}%
which maps
\begin{equation*}
\mathcal{U}_{T}(\mathcal{F}_{0})\times L^{2}(0,T;H^{1}(\mathcal{F}%
_{0}))\times H^{1}(0,T)\times H^{1}(0,T)
\end{equation*}%
into itself. Functions $(\mathbf{z}_{\mathcal{F}},q_{\mathcal{F}},%
\boldsymbol{\xi },\mathbf{w})=\mathcal{P}(\widehat{\mathbf{z}}_{\mathcal{F}},%
\widehat{q}_{\mathcal{F}},\widehat{\boldsymbol{\xi }},\widehat{\mathbf{w}})$
are the solution of the linear system \eqref{lin}\ with%
\begin{align*}
\boldsymbol{F}_{0}=& \boldsymbol{F}_{0}(\widehat{\mathbf{z}}_{\mathcal{F}},%
\widehat{q}_{\mathcal{F}},\widehat{\boldsymbol{\xi }},\widehat{\mathbf{w}}%
)=-(\mathcal{M}-\mu \mathcal{L}+\mu \Delta )\widehat{\mathbf{z}}_{\mathcal{F}%
}+(\nabla -\mathcal{G})\widehat{q}_{\mathcal{F}}-{\mathcal{N}}\widehat{%
\mathbf{z}}_{\mathcal{F}}+{\widetilde{\boldsymbol{f}}}_{0}, \\
{\boldsymbol{F}_{1}}=& {\boldsymbol{F}_{1}(\widehat{\mathbf{z}}_{\mathcal{F}%
},\widehat{q}_{\mathcal{F}},\widehat{\boldsymbol{\xi }},\widehat{\mathbf{w}}%
)=\widetilde{\boldsymbol{f}}_{1}}+m(\widehat{\mathbf{w}}\times \widehat{%
\boldsymbol{\xi }}) \\
& +\int_{\partial \mathcal{B}_{0}}\mathbb{T}(\widehat{\mathbf{z}}_{\mathcal{F%
}},\widehat{q}_{\mathcal{F}})\widetilde{\boldsymbol{n}}\,\mathrm{d}%
\bm{\sigma }-\,\int_{\partial \mathcal{B}_{0}}\boldsymbol{\mathcal{T}}(%
\widehat{\mathbf{z}}_{\mathcal{F}},\widehat{q}_{\mathcal{F}})\widetilde{%
\boldsymbol{n}}\,\mathrm{d}\bm{\sigma }, \\
{\boldsymbol{F}_{2}}=& {\boldsymbol{F}_{2}}(\widehat{\mathbf{z}}_{\mathcal{F}%
},\widehat{q}_{\mathcal{F}},\widehat{\boldsymbol{\xi }},\widehat{\mathbf{w}})%
{=}\widetilde{\boldsymbol{f}}_{2}+\widehat{\mathbf{w}}\times (\mathbf{I}%
\widehat{\mathbf{w}}) \\
& +\int_{\partial \mathcal{B}_{0}}\boldsymbol{y}\times \mathbb{T}(\widehat{%
\mathbf{z}}_{\mathcal{F}},\widehat{q}_{\mathcal{F}})\widetilde{\boldsymbol{n}%
}\,\mathrm{d}\bm{\sigma }-\,\int_{\partial \mathcal{B}_{0}}\boldsymbol{y}%
\times \boldsymbol{\mathcal{T}}(\widehat{\mathbf{z}}_{\mathcal{F}},\widehat{q%
}_{\mathcal{F}})\widetilde{\boldsymbol{n}}\,\mathrm{d}\bm{\sigma }.
\end{align*}

For some $R>0$ we define the set%
\begin{align*}
K=\{(\widehat{\mathbf{z}}_{\mathcal{F}},\widehat{q}_{\mathcal{F}},\widehat{%
\boldsymbol{\xi }},\widehat{\mathbf{w}})\in & \ \mathcal{U}_{T}(\mathcal{F}%
_{0})\times L^{2}(0,T;H^{1}(\mathcal{F}_{0}))\times H^{1}(0,T)\times
H^{1}(0,T): \\
& \Vert \widehat{\mathbf{z}}_{\mathcal{F}}\Vert _{U_{T}(\mathcal{F}%
_{0})}+\Vert \widehat{q}_{\mathcal{F}}\Vert _{L^{2}(0,T;H^{1}(\mathcal{F}%
_{0}))}+||\boldsymbol{\xi }\Vert _{H^{1}(0,T)}+\Vert \widehat{\mathbf{w}}%
\Vert _{H^{1}(0,T)}\leq R\}.
\end{align*}

As the first step we show that $\mathcal{P}(K)\subset K$. We put $C_{0},B_{0}
$ constants that depends only on $T,$ $\Vert \boldsymbol{u}_{0}\Vert _{H^{1}(%
\mathcal{F}_{0})},\Vert \boldsymbol{u}_{\mathcal{B},0}\Vert _{H^{1}(\mathcal{%
B}_{0})},\Vert \boldsymbol{f}_{0}\Vert _{L_{loc}^{2}(\mathbb{R}^{+};H^{1}(%
\mathcal{F}_{0}))},\Vert (\boldsymbol{f}_{1},\boldsymbol{f}_{2})\Vert
_{L_{loc}^{2}(\mathbb{R}^{+})}$ \ (see the regularity \eqref{R}). \ Moreover
$C_{0},B_{0}$ are nondecreasing functions of $T$. Also $C_{0}$ is a nondecreasing function of $R$. Then Proposition 3.3 gives
\begin{align*}
\Vert \mathbf{z}_{\mathcal{F}}\Vert _{\mathcal{U}_{T}(\mathcal{F}_{0})}&
+\Vert q_{\mathcal{F}}\Vert _{L^{2}(0,T,H^{1}(\mathcal{F}_{0}))}+\Vert
\boldsymbol{\xi }\Vert _{H^{1}(0,T)}+\Vert \mathbf{w}\Vert _{H^{1}(0,T)} \\
& \leq C_{0}(\Vert (\boldsymbol{F}_{1},\boldsymbol{F}_{2})\Vert
_{L^{2}(0,T)}+\Vert \boldsymbol{F}_{0}\Vert _{L^{2}(0,T;L^{2}(\mathcal{F}%
_{0}))}+1).
\end{align*}%
From \cite{T} we have%
\begin{equation*}
\Vert \boldsymbol{F}_{0}\Vert _{L^{2}(0,T;L^{2}(\mathcal{F}_{0}))}+\Vert (%
\boldsymbol{F}_{1},\boldsymbol{F}_{2})\Vert _{L^{2}(0,T)}\leq
C_{0}T^{1/10}+B_{0}.
\end{equation*}%
Therefore it follows that%
\begin{equation*}
\Vert \mathbf{z}_{\mathcal{F}}\Vert _{\mathcal{U}_{T}(\mathcal{F}%
_{0})}+\Vert q_{\mathcal{F}}\Vert _{L^{2}(0,T,H^{1}(\mathcal{F}_{0}))}+\Vert
\boldsymbol{\xi }\Vert _{H^{1}(0,T)}+\Vert \mathbf{w}\Vert _{H^{1}(0,T)}\leq
C_{0}T^{1/10}+B_{0}.
\end{equation*}%
Now choosing $R$ and $T,$ \ such that $4B_{0}<R$\ and $C_{0}(T)T^{1/10}<%
\frac{R}{4},$ \ we deduce that
\begin{equation*}
C_{0}T^{1/10}+B_{0}<R\qquad \text{and}\qquad \mathcal{P}(K)\subset K.
\end{equation*}

In the second step we prove that $\mathcal{P}$ is a contraction operator,
when $T$ is small enough and $R$ is large enough. Let us define
\begin{equation*}
(\mathbf{z}_{\mathcal{F}}^{i},q_{\mathcal{F}}^{i},\boldsymbol{\xi }^{i},%
\mathbf{w}^{i})=\mathcal{P}(\widehat{\mathbf{z}}_{\mathcal{F}}^{i},\widehat{q%
}_{\mathcal{F}}^{i},\widehat{\boldsymbol{\xi }}^{i},\widehat{\mathbf{w}}%
^{i})\qquad \text{for}\quad (\widehat{\mathbf{z}}_{\mathcal{F}}^{i},\widehat{%
q}_{\mathcal{F}}^{i},\widehat{\boldsymbol{\xi }}^{i},\widehat{\mathbf{w}}%
^{i})\in K,\mathcal{\quad }i=1,2,
\end{equation*}%
\ and calculate the diferences%
\begin{eqnarray*}
(\mathbf{z}_{\mathcal{F}},q_{\mathcal{F}},\boldsymbol{\xi },\mathbf{w}) &=&(%
\mathbf{z}_{\mathcal{F}}^{1},q_{\mathcal{F}}^{1},\boldsymbol{\xi }^{1},%
\mathbf{w}^{1})-(\mathbf{z}_{\mathcal{F}}^{1},q_{\mathcal{F}}^{1},%
\boldsymbol{\xi }^{1},\mathbf{w}^{1}), \\
(\widehat{\mathbf{z}}_{\mathcal{F}},\widehat{q}_{\mathcal{F}},\widehat{%
\boldsymbol{\xi }},\widehat{\mathbf{w}}) &=&(\widehat{\mathbf{z}}_{\mathcal{F%
}}^{1},\widehat{q}_{\mathcal{F}}^{1},\widehat{\boldsymbol{\xi }}^{1},%
\widehat{\mathbf{w}}^{1})-(\widehat{\mathbf{z}}_{\mathcal{F}}^{2},\widehat{q}%
_{\mathcal{F}}^{2},\widehat{\boldsymbol{\xi }}^{2},\widehat{\mathbf{w}}^{2}).
\end{eqnarray*}%
Then the functions $(\mathbf{z}_{\mathcal{F}},q_{\mathcal{F}},\boldsymbol{%
\xi },\mathbf{w})$ satisfy the system \eqref{lin} with zero initial
conditions, i.e.
\begin{equation*}
\mathbf{z}_{\mathcal{F}}(0)=\boldsymbol{0}\quad \mathrm{in}\quad \mathcal{F}%
_{0},\qquad \boldsymbol{\xi }(0)=\boldsymbol{0},\qquad \mathbf{w}(0)=%
\boldsymbol{0}
\end{equation*}%
and%
\begin{equation*}
\boldsymbol{F}_{k}=\boldsymbol{F}_{k}(\widehat{\mathbf{z}}_{\mathcal{F}}^{1},%
\widehat{q}_{\mathcal{F}}^{1},\widehat{\boldsymbol{\xi }}^{1},\widehat{%
\mathbf{w}}^{1})-\boldsymbol{F}_{k}(\widehat{\mathbf{z}}_{\mathcal{F}}^{2},%
\widehat{q}_{\mathcal{F}}^{2},\widehat{\boldsymbol{\xi }}^{2},\widehat{%
\mathbf{w}}^{2}),\qquad k=0,1,2.
\end{equation*}%
It is easy to check%
\begin{align*}
\Vert \boldsymbol{F}_{0}\Vert _{L^{2}(0,T;L^{2}(\mathcal{F}_{0}))}& +\Vert (%
\boldsymbol{F}_{1},\boldsymbol{F}_{2})\Vert _{L^{2}(0,T)} \\
& \leq C_{0}T^{1/10}(\Vert \widehat{\mathbf{z}}_{\mathcal{F}}\Vert _{%
\mathcal{U}_{T}(\mathcal{F}_{0})}+\Vert \widehat{q}_{\mathcal{F}}\Vert
_{L^{2}(0,T;H^{1}(\mathcal{F}_{0}))}+\Vert (\widehat{\boldsymbol{\xi }},%
\widehat{\mathbf{w}})\Vert _{H^{1}(0,T)}).
\end{align*}%
Applying Proposition 3.3 we obtain%
\begin{align*}
\Vert \mathbf{z}_{\mathcal{F}}\Vert _{\mathcal{U}_{T}(\mathcal{F}_{0})}&
+\Vert q_{\mathcal{F}}\Vert _{L^{2}(0,T;H^{1}(\mathcal{F}_{0}))}+\Vert
\boldsymbol{\xi }\Vert _{H^{1}(0,T)}+||\mathbf{w}\Vert _{H^{1}(0,T)} \\
& \leq C_{0}T^{1/10}\left( \Vert \widehat{\mathbf{z}}_{\mathcal{F}}\Vert _{%
\mathcal{U}_{T}(\mathcal{F}_{0})}+\ \Vert \widehat{q}_{\mathcal{F}}\Vert
_{L^{2}(0,T;H^{1}(\mathcal{F}_{0})}+\Vert (\widehat{\boldsymbol{\xi }},%
\widehat{\mathbf{w}})\Vert _{H^{1}(0,T)}\right) .
\end{align*}%
Thus, when $T$ is small enough, $\mathcal{P}$ is a contraction operator,
such that the unique fixed point of $\mathcal{P}$ \ is a unique solution $(%
\widetilde{\boldsymbol{u}}_{\mathcal{F}},\widetilde{p}_{\mathcal{F}},%
\widetilde{\boldsymbol{\eta }},\widetilde{\boldsymbol{\omega }})$ of system (%
\ref{modifiedsystem}) in K. For given two strong solutions of (\ref%
{modifiedsystem}), there exists a large enough $R,$ such that these
solutions belong to the set $K.$ Since the system (\ref{modifiedsystem}) has
a unique solution in $K$ by the continuity argument we get that system (\ref%
{fluidmotion})-(\ref{bodymotion}) has a unique solution.

\bigskip

\noindent \textbf{Acknowledgements.} \textit{The work of H. Al Baba and \v{S}%
. Ne\v{c}asová was supported by grant No. 16-03230S of GA\v{C}R in the
framework of RVO 67985840. The work of B. Muha was supported by Croatian
Science Foundation grant number 9477.}

\bigskip


\begin{thebibliography}{99}
\bibitem{CN} N. Chemetov, \v{S}. Ne\v{c}asová, {The motion of the rigid body
in the viscous fluid including collisions. Global solvability result,} %
\newblock Nonlinear Anal. Real World Appl. \textbf{34} (2017), 416--445.

\bibitem{CST} C.~ Conca, J.~ San Martin, M.~ Tucsnak, \newblock Existence of
solutions for the equations modelling the motion of a rigid body in a
viscous fluid, \newblock Commun. Partial Differential Equation \textbf{25}
(2000), 1019--1042.

\bibitem{DEES1} B. Desjardins, M.J. Esteban, \newblock Existence of weak
solutions for the motion of rigid bodies in a viscous fluid, \newblock Arch.
Rational Mech. Anal. \textbf{146} (1999), 59--71.

\bibitem{DEES2} B. Desjardins, M.~J. Esteban, \newblock On weak solutions
for fluid-rigid structure interaction: Compressible and incompressible
models, \newblock Commun. Partial Differential Equations \textbf{25} (2000),
1399--1413.

\bibitem{GH2} D. Gérard-Varet, M. Hillairet, \newblock Existence of weak
solutions up to collision for viscous fluid-solid systems with slip, %
\newblock Comm. Pure Appl. Math. \textbf{67} (2014), no. 12, 2022--2075.

\bibitem{GHC} D. Gérard--Varet, M. Hillairet, C. Wang, \newblock The
influence of boundary conditions on the contact problem in a 3D
Navier-Stokes flow, \newblock J. Math. Pures Appl. (9), \textbf{103} (2015),
no. 1, 1--38.

\bibitem{HES} T.~I. Hesla, \newblock Collision of smooth bodies in a viscous
fluid: A mathematical investigation, \newblock PhD Thesis -- Minnesota, 2005.

\bibitem{HIL} M. Hillairet, \newblock Lack of collision between solid bodies
in a 2{D} incompressible viscous flow, \newblock Comm. Partial Differential
Equations \textbf{32} (2007), no. 7-9, 1345--1371.

\bibitem{HOST} K.-H. Hoffmann, V.~N. Starovoitov, \newblock On a motion of a
solid body in a viscous fluid. Two dimensional case, \newblock Adv. Math.
Sci. Appl. \textbf{9} (1999), 633--648.

\bibitem{IW} A. Inoue, M. Wakimoto, On existence of solutions of the
Navier-Stokes equation in a time dependent domain, \newblock J. Fac. Sci.
Univ. Tokyo Sect. IA Math. \textbf{24} (1977), no. 2, 303--319.

\bibitem{G2} G.P. Galdi, \newblock On the motion of a rigid body in a
viscous liquid: A mathematical analysis with applications, \newblock %
Handbook of Mathematical Fluid Dynamics, Vol. 1, Ed. by Friedlander, D.
Serre, Elsevier, 2002.

\bibitem{S} M.D. Gunzburger, H. Lee, G. Seregin, \newblock Global existence
of weak solutions for viscous incompressible flows around a moving rigid
body in three dimensions, \newblock J. Math. Fluid Mech. {\textbf{2}},
(2000), no. 3, 219--266.

\bibitem{K1} T. Kato, \newblock Fractional powers of dissipative operators, %
\newblock  J. Math. Soc. Japan \textbf{13}, 246--274, (1961)

\bibitem{K2} T. Kato, \newblock Abstract evolution equations of parabolic
type in Banach and Hilbert spaces. \newblock  Nagoya Math. J. \textbf{19},
93--125, (1961)

\bibitem{NP1} J. Neustupa, P. Penel, \newblock Existence of a weak solution
to the Navier-Stokes equation with Navier's boundary condition around
striking bodies, \newblock Comptes Rendus Mathematique \textbf{347} (2009),
no. 11-12, 685--690.

\bibitem{NP2} J. Neustupa, P. Penel, \newblock A Weak solvability of the
Navier-Stokes equation with Navier's boundary condition around a ball
striking the wall, \newblock In the book: Advances in Mathematical Fluid
Mechanics: Dedicated to Giovanni Paolo Galdi, Springer--Verlag Berlin,
(2010) 385--408.

\bibitem{ShibataShimada} Y. Shibata, R. Shimada, \newblock On a generalized
resolvent estimate for the Stokes system with Robin boundary condition, %
\newblock J. Math. Soc. Japan \textbf{59} (2007), no. 2, 469--519.

\bibitem{T} T. Takahashi, \newblock Analysis of strong solutions for the
equations modeling the motion of a rigid-fluid system in a bounded domain, %
\newblock Adv. Differential Equations \textbf{8} (2003), no. 12, 1499--1532.

\bibitem{TT} T. Takahashi, M. Tucsnak, \newblock Marius Global strong
solutions for the two-dimensional motion of an infinite cylinder in a
viscous fluid, \newblock J. Math. Fluid Mech. \textbf{6} (2004), no. 1,
53--77.

\bibitem{Wa} C. Wang, \newblock Strong solutions for the fluid-solid systems
in a 2-D domain, \newblock Asymptot. Anal. \textbf{89} (2014), no. 3-4,
263--306.
\end{thebibliography}
\end{document}